\newcommand*\circled[1]{\tikz[baseline=(char.base)]{
            \node[shape=circle,draw,inner sep=2pt] (char) {#1};}}
\numberwithin{table}{section}
\numberwithin{equation}{section}
\theoremstyle{plain}
\newtheorem{theorem}{Theorem}[section]
\newtheorem{proposition}[theorem]{Proposition}
\newtheorem{definition}[theorem]{Definition}
\newtheorem{example}[theorem]{Example}
\newtheorem{remark}[theorem]{Remark}
\author[1]{ \textbf{Bryan S. Hernandez}}
\author[2]{ \textbf{Patrick Vincent N. Lubenia}}
\affil[1]{\small \textit{Institute of Mathematics, University of the Philippines Diliman, Quezon City 1101, Philippines}}
\affil[2]{\small \textit{Systems and Computational Biology Research Unit, Center for Natural Sciences and Environmental Research, Manila 0922, Philippines}}
\affil[*]{Email addresses: \texttt{bshernandez@up.edu.ph},
\texttt{pnlubenia@upd.edu.ph}}
\title{\textbf{A decomposition-based approach for deriving positive steady states of a class of chemical reaction networks with non-mass-action kinetics}}
\date{}
\begin{document}
\maketitle
\begin{abstract} 
Steady states are frequently used to investigate the long-term behaviors of (bio)-chemical systems. Recently, there has been a growing interest in network-based approaches due to their efficiency in deriving parametrizations of positive steady states in systems with mass-action kinetics. In this study, we extend this approach to derive positive steady states in networks under non-mass-action kinetics, specifically mixed kinetics.
In a system with mixed kinetics, some reactions {may follow} mass-action kinetics, while others in the same network follow different rate laws, such as quotient rate laws. An example of such complexity is evident in a mathematical model of the insulin signaling pathway in type 2 diabetes. To compute its positive {steady states}, we adapt our existing network decomposition approach, originally designed for mass-action kinetics, to handle networks with non-mass-action kinetics. This approach involves breaking down a given network into smaller, independent subnetworks to derive the positive steady states of each subnetwork separately. These individual steady states are then combined to obtain the positive steady states of the entire network.
This strategy makes computations more manageable for complex and large networks.
More importantly, this method could separate reactions with purely mass-action kinetics into certain subnetworks from those that follow different rate laws.
We also present an illustrative example that provides insights into methods for transforming networks with mixed kinetics into their associated mass-action systems.
\\ \\
	{\bf{Keywords:}} {chemical reaction networks, non-mass-action kinetics, finest independent decompositions, equilibria parametrization, absolute concentration robustness, insulin signaling}
	
\end{abstract}

\thispagestyle{empty}
\section{Introduction}
\label{sec:1}

In the past decades, significant attention has been given to \emph{chemical reaction networks (CRNs)} to study dynamical behaviors of (bio)chemical systems. In particular, \emph{steady states} are often used to describe the long-term behaviors of these systems.
{To parametrize the positive steady states of mass-action systems, in particular, the method of network translation \cite{HongSIAM,Johnston2014} that modifies the structure of the network, can be employed \cite{MJEB2019,JMP2019:parametrization}.}By ``parametrization'' we simply mean expressing the steady states of a network in terms of the parameters of the model.
Furthermore, a network obtained after applying the network translation is called a \emph{translated network}.

To illustrate the process of network translation, consider a very simple CRN with only two reactions $0 \to A$ (production of $A$) and $A+B \to B$ (interaction between $A$ and $B$ results in the disappearance of $A$). The reaction $A+B \to B$ can be shifted to $A \to 0$ (i.e., replacing an original reaction with the same stoichiometric vector) while still associating it with the rate law of the original second reaction based on the complex $A+B$ (i.e., $k_2 ab$) to maintain the dynamics of the system.
{We clarify that $B$ does not disappear in the network that is why the concentration $b$ still appears in the rate function. Such shifting is done to obtain a network with desirable properties needed for the method of network translation to work but still preserving the stoichiometric vectors in the network, i.e., both $A+B \to B$ and $0 \to A$ just loses a single $A$.}
Hence, network translation produces two structures. One structure, called {the} \emph{stoichiometric network}, is a network with nodes that are identified by the new reactions. The other structure, called \emph{kinetic-order network}, is a network with nodes coming from the old or original source nodes which impose the original kinetics. A network with these two structures is called a \emph{generalized} (\emph{chemical reaction}) \emph{network} \cite{Muller2012,Muller2014}. {Further details are given in Section \ref{section:network:translation}.}

Specifically, if one is able to determine a translated network of a CRN with mass-action kinetics where the underlying network has weak reversibility and zero deficiency (i.e., both stoichiometric and kinetic-order networks are weakly reversible and zero deficiency), then one can easily derive the parametrization of the network's positive state using the method introduced by Johnston et al. \cite{JMP2019:parametrization}. 
\emph{Weak reversibility} means that each reaction is {on a directed} cycle when the CRN is regarded as a directed graph, while \emph{deficiency} measures the linear dependency among the reactions in the network \cite{ShinarFeinberg2011}.
{Existence of positive steady states for weakly reversible mass-action systems were studied in \cite{bboros2019}.}

Recently, Hernandez et al. \cite{Hernandez2023} proposed a significantly {more} efficient way of solving positive steady states for networks with mass-action kinetics that can be decomposed into independent subnetworks. The method first decomposes the network into independent subnetworks and then parametri- zes the positive steady states of the subnetworks individually using the method of Johnston et al. \cite{JMP2019:parametrization} (rather than directly parametrizing the positive {steady states} of the whole network). Finally, the positive steady states of the subnetworks are combined to derive the parametrized positive steady states of the whole network. 

In this work, we modify and extend the method of Hernandez et al. \cite{Hernandez2023} to accommodate networks that follow {non-mass-action} kinetics (e.g., polynomial, Michaelis-Menten, Hill-type, quotient, or mixed kinetics). For subnetworks that endowed with purely mass-action or power-law kinetics, we still apply the method proposed by Johnston et al. \cite{JMP2019:parametrization}. On the other hand, for subnetworks that follow a different kinetics, we compute manually 
for the {steady states}. We apply our method to derive the parametrized positive {steady states} of a complex mathematical model of an insulin signaling pathway in type 2 diabetes {\cite{BrannmarkInsulin2013}}.
{Parametrizations of positive steady states are useful in assessing essential biological characteristics like absolute concentration robustness and multistationarity, as documented or mentioned in various studies \cite{craciun2006multiple, dickenstein2019multistationarity,ShinarFeinberg2010,JMP2019:parametrization}.
}

\section{Preliminaries}
\label{chap:preliminaries}

    In this section, we discuss the basic and important concepts on CRNs and chemical reaction systems \cite{FeinbergLecture,FeinbergBook2019}. We also present useful concepts and results regarding decomposition of CRNs \cite{Feinberg1987stability,FeinbergBook2019,Hernandezetal2022,Hernandez2023}.
    
    \subsection{Notations}
    Let $\mathbb R_{\ge 0}$ denote the \emph{set of non-negative real numbers}, and $\mathbb R_{> 0}$ the \emph{set of positive real numbers}. Similarly, let $\mathbb Z_{\ge 0}$ be the \emph{set of non-negative integers}.
    
	\subsection{Chemical reaction networks}
	\begin{definition}
		A {\emph{chemical reaction network}} $\mathcal{N}$ is a triple of nonempty and finite sets $\left(\mathcal{S},\mathcal{C},\mathcal{R}\right)$ where
		\begin{itemize}
		\item[1.] $\mathcal{S}$ is the set of \emph{species} $X_1,\ldots,X_m$,
		\item[2.] $\mathcal{C}$ is the set of \emph{complexes} of the form $y=\displaystyle \sum_{i=1}^m y_i X_i$ with $y_i\in \mathbb{Z}_{\ge0}$, and
		\item[3.] $\mathcal{R} \subset \mathcal{C} \times \mathcal{C}$ is the set of \emph{reactions} that satisfies the following properties:
				\begin{itemize}
		        \item[a.] $(y,y) \notin \mathcal{R}$ for each $y \in \mathcal{C}$, and
		        \item[b.] for each $y \in \mathcal{C}$, there exists {a} $y' \in \mathcal{C}$ such that $(y,y') \in \mathcal{R}$ or $(y',y) \in \mathcal{R}$.
		        \end{itemize}
		\end{itemize}
	\end{definition}

We often use the notation $y\to y'$ to denote the reaction $(y,y')$. In the definition, $m$ is the number of species in the network, while we let $n$ and $r$ be the numbers of complexes and reactions, respectively. {In the reaction $y\to y'$, the complex $y$ is called a \emph{reactant} or \emph{source complex} while the complex $y'$ is called a \emph{product complex}. Furthermore, we can identify the complexes with
vectors in $\mathbb{R}^m$.}

One can view a CRN as a \emph{directed graph} with complexes as \emph{vertices} and reactions as \emph{edges}. The {(\emph{strong}) \emph{linkage classes}} of the CRN are the (strongly) connected components of the graph.
A CRN is {\emph{weakly reversible}} if each of its linkage classes is a strong linkage class.
Equivalently, it is weakly reversible if each reaction belongs to a directed cycle.

\begin{example}
Consider the CRN in Figure \ref{fig:CRNexample}, which we denote by $\mathcal{N}$. This network has three species ($A, B,$ and $C$), seven complexes ($B+C$, $A+C$, $A$, $0$, $B$, $2C$, and $C$) and five reactions ($R_1$, $R_2$, $R_3$, $R_4$, and $R_5$). In particular, for reaction $R_1: B+C \to A+C$, $B+C$ is its source complex while $A+C$ is its product complex. {In addition, $R_2: A \to 0$ is an outflow reaction that denotes degradation or consumption of species $A$. Specifically, the zero complex (0) can be interpreted as the exterior of the network environment. On the other hand, $R_3: 0 \to B$ is an inflow of reaction that denotes production or supply of species $B$.}

The CRN has three linkage classes because there are three connected components. Furthermore, $\mathcal{N}$ is not weakly reversible because there are reactions that do not belong to a cycle (e.g., $R_1: B+C \to A+C$).
\label{ex:CRN}
\end{example}

\begin{figure}[!h]
{
\footnotesize
\begin{center}
\includegraphics[width=11cm,height=8cm,keepaspectratio]{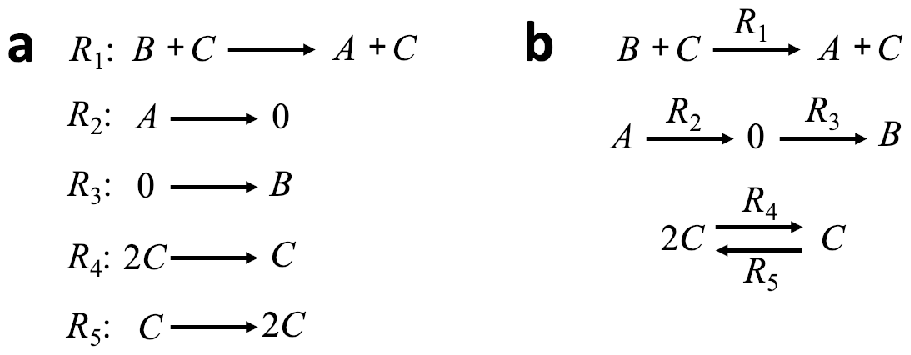}
\end{center}
}
\caption{{\bf{A simple example of a CRN.}}
{
{\bf{a}} The CRN has three species ($A, B,$ and $C$), seven complexes ($B+C$, $A+C$, $A$, $0$, $B$, $2C$, and $C$) and five reactions ($R_1$, $R_2$, $R_3$, $R_4$, and $R_5$).
{\bf{b}} The representation of the CRN where the reactions are grouped according to their respective linkage classes.
}
}
\label{fig:CRNexample}
\end{figure}

From a dynamical perspective, the concentrations of the species can vary when a reaction $y \to y'$ occurs at a certain time. The change can be quantified by the difference $y'-y$, which is called the \emph{reaction vector} of the reaction $y \to y'$. All the changes caused by the reactions belong to the subspace of the ambient space $\mathbb{R}^m$, known as the \emph{stoichiometric subspace} of a CRN defined as $$S := {\rm span}\left\{{y' - y| y \to y' \in \mathcal{R}}\right\}.$$
The $m \times r$ matrix where the $i$-th column contains the coefficients of the associated species in the $i$-th reaction vector is called the \emph{stoichiometric matrix}.

We now introduce an important concept
in the theory of CRNs:
	\begin{definition}
	The \emph{deficiency} of a CRN is $\delta=n-\ell-s$ where $n$ is the number of complexes, $\ell$ is the number of linkage classes, and $s$ is the dimension of the stoichiometric subspace (or the rank of the stoichiometric matrix).
	\end{definition}

\begin{example}
Consider the CRN in Example \ref{ex:CRN}. The reaction vector associated to a reaction in the CRN can be obtained by subtracting from the product complex the reactant complex. For instance, {the} reaction vector associated with the first reaction $B+C \to A+C$ can be obtained by subtracting $B+C$ (source complex) from $A+C$ (product complex). Hence, the first reaction vector is $(A+C)-(B+C)=A-B$. By associating species $A$, $B$, and $C$ with the standard basis vectors of the Euclidean space $\mathbb{R}^3$ (three corresponds to the number of species), $A-B=1A-1B+0C=1\cdot[1,0,0]^\top-1\cdot[0,1,0]^\top+0\cdot[0,0,1]^\top=[1,-1,0]^\top$. This is precisely the first column of the stoichiometric matrix of the CRN. The same procedure is done to obtain all the reaction vectors, and hence, all the columns of the stoichiometric matrix. The rank of this stoichiometric matrix is three ($s=3$) (one can see this be inspection in Figure \ref{fig:stoichiometric} (right)).
From Figure \ref{fig:CRNexample}b, there are seven complexes ($n=7$) and three linkage classes ($\ell=3$). Thus, the deficiency is $\delta = n -\ell -s = 7-3-3=1$.
\label{ex:stochiometric}
\end{example}

\begin{figure}[!h]
{
\footnotesize
\begin{center}
\includegraphics[width=14cm,height=7cm,keepaspectratio]{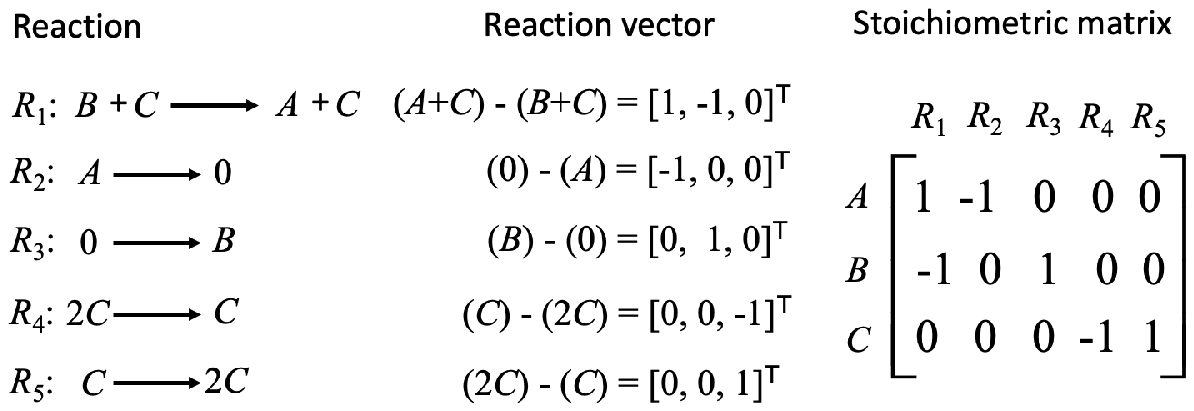}
\end{center}
}
\caption{{\bf{The stoichiometric matrix of the simple CRN in Figure \ref{fig:CRNexample}.}}
{A reaction vector is obtained by subtracting the source complex from the product complex of a reaction. After getting all the reaction vectors, the stoichiometric matrix is constructed by placing these vectors as columns, following the order of the reactions in the network. 
}
}
\label{fig:stoichiometric}
\end{figure}

We associate kinetics with a CRN to describe the dynamics of a given system. We define the kinetics and, consequently, a chemical reaction system in the following manner.

\begin{definition}
A \emph{kinetics} for a reaction network $\mathcal{N}=(\mathcal{S}, \mathcal{C}, \mathcal{R})$ is an assignment to
each reaction $y \to y' \in \mathcal{R}$ of a continuously differentiable {rate function} $\mathcal{K}_{y\to y'}: \mathbb{R}^\mathcal{S}_{{\geq} 0} \to \mathbb{R}_{\ge 0}$ such that the following positivity condition holds:
$\mathcal{K}_{y\to y'}(c) > 0$ if and only if ${\rm{supp \ }} y \subset {\rm{supp \ }} c$, where ${\rm{supp \ }} y$ refers to the support of the vector $y$.
The system $\left(\mathcal{N},\mathcal{K}\right)$ is called a \emph{chemical kinetic system} or a \emph{chemical reaction system}.
\end{definition}

\begin{definition}
	The \emph{species formation rate function} of a chemical reaction system $(\mathcal{N},\mathcal{K})$ is defined as $f\left( x \right) = \displaystyle \sum\limits_{{y} \to {y'} \in \mathcal{R}} {{\mathcal{K}_{{y} \to {y'}}}\left( x \right)\left( {{y'} - {y}} \right)}.$
\end{definition}
The system of \emph{ordinary differential equations} (ODEs) of a chemical reaction system is given by $\dfrac{{dx}}{{dt}} = f\left( x \right)$. In addition, a \emph{steady state} or an \emph{equilibrium} of the system is a vector of concentration of species that makes $f$ the zero vector. The set of positive steady states of the network $\mathcal{N}$ with specified kinetics $\mathcal{K}$ is denoted by $E_+:=E_+(\mathcal{N},\mathcal{K}).$

\begin{definition}
A kinetics for a CRN $(\mathcal{S},\mathcal{C},\mathcal{R})$ is \emph{mass-action} if for each reaction $y\to y'$ (i.e., $[y_1,y_2,\ldots, y_m]^\top \to [y_1',y_2',\ldots, y_m']^\top$),
$$\mathcal{K}_{y\to y'}(x)=k_{y\to y'}\prod _{i \in \mathcal{S}} x_i^{y_i}$$
for some $k_{y\to y'}>0$.
\end{definition}

For example, suppose that a CRN that has mass-action kinetics contains the reaction $A+B \to B$. The associated kinetics (or rate law) for the reaction is $kab$, i.e., the product of the concentrations $a$ and $b$ of species $A$ and $B$, respectively (determined by the source complex $A+B$) multiplied by the rate constant $k$.

A \emph{mass-action system} is a CRN endowed with mass-action kinetics. If at least one reaction does not follow the mass-action rate function, then the system is called \emph{non-mass-action}. For instance, if the rate function of a certain reaction in a network follows a quotient rate law, then the corresponding system is not a mass-action system.

\begin{example}
Consider again the CRN presented in Examples \ref{ex:CRN} and \ref{ex:stochiometric}. Figure \ref{fig:ODEs} shows how to obtain the ODEs of the simple CRN under mass-action kinetics. The concentrations of the species $A$, $B$, and $C$ are denoted as $a$, $b$, and $c$, respectively. To get the rate function for reaction $R_1:B+C \to A+C$, we multiply the concentrations of the species present in the source complex, i.e., $bc$. We then multiply it by the rate constant $k_1$ that we associate with the reaction $R_1$. Thus, the rate function for the first reaction is $k_1bc$. A similar procedure can be applied to obtain the rate functions of the remaining reactions.

To obtain the ODEs associated with a CRN with mass-action kinetics, we multiply the reaction rate by the reaction vector, e.g., $k_1bc \cdot [1,-1,0]^\top$. We do this for each reaction and get the sum over all these reactions. We then equate the result to the vector of time derivatives of the concentrations of the species (Figure \ref{fig:ODEs} bottom). Simplifying, we get the following system of ODEs:
\begin{align*}
    \dfrac{da}{dt}=& k_1 bc -k_2 a\\
    \dfrac{db}{dt}=& -k_1 bc +k_3 \\
    \dfrac{dc}{dt}=& -k_4 c^2 +k_5 c.
\end{align*}
\end{example}

\begin{figure}[!h]
{
\footnotesize
\begin{center}
\includegraphics[width=20cm,height=10cm,keepaspectratio]{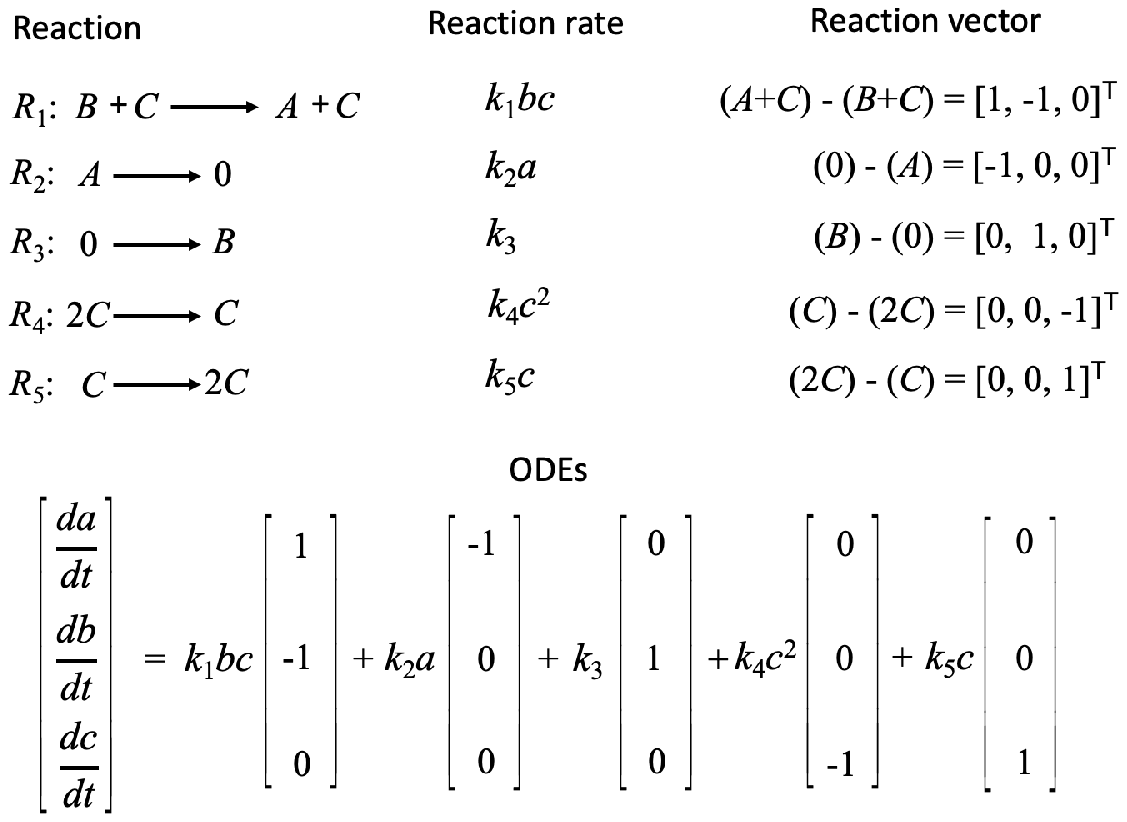}
\end{center}
}
\caption{{\bf {The ODEs of a CRN (in Figure \ref{fig:CRNexample}) with mass-action kinetics.}}
Assume that the CRN follows the mass-action kinetics. To obtain the ODEs, we need to compute the reaction rate and the reaction vector for each reaction. The left-hand side of the ODEs is composed of the time derivatives of the concentration of the species. On the other hand, to get the right-hand side of the ODEs, multiply the reaction vector by the associated reaction rate of each reaction. Then, get the sum of all these products over all the reactions.
}
\label{fig:ODEs}
\end{figure}

\subsection{Decomposition of chemical reaction networks}
\label{CRN:decomposition}

We start this section with a formal definition of the general decomposition of reaction networks (Appendix 6.A \cite{FeinbergBook2019}, Section 5.4 \cite{Feinberg1987stability}).

\begin{definition}
    Let $\mathcal{N}$ be a reaction network and $\mathcal{R}$ its reaction set.
	A {\emph{decomposition of the reaction network}} into $\mathcal{N}_1$, $\mathcal{N}_2$, \ldots, $\mathcal{N}_\alpha$ is induced by a partition of $\mathcal{R}$ into $\mathcal{R}_1,  \mathcal{R}_2, \ldots, \mathcal{R}_\alpha$, respectively. The resulting networks $\mathcal{N}_1$,  $\mathcal{N}_2$, \ldots, $\mathcal{N}_\alpha$ are called \emph{subnetworks} of $\mathcal{N}$.
\end{definition}


Technically, we consider each $\mathcal{N}_i$ to have the same set of species as $\mathcal{N}$, although some of the species seem to have no role or {be} absent in this particular subnetwork. In addition, we consider $\mathcal{N}_i = (\mathcal{S},\mathcal{C}_i,\mathcal{R}_i)$ where $\mathcal{C}_i$ is the set of all complexes that appear as a product or reactant of a reaction in the subnetwork $i$ \cite{FeinbergBook2019}. If $\mathcal{N}$ has only one subnetwork under the decomposition, we say that it has a {\emph{trivial decomposition}}.
In this case, $\alpha=1$ and the network itself is its only subnetwork.

We note that each subnetwork $\mathcal{N}_i$ has its own stoichiometric subspace $S_i\subseteq S$. We are now ready to define the concept of independent decomposition:

\begin{definition}
A network decomposition is said to be {\emph{independent}} if the stoichiometric subspace of the whole network is equal to the direct sum of the stoichiometric subspaces of the subnetworks.
\end{definition}

An equivalent way of showing independence of a CRN decomposition is to show that the rank of the stoichiometric matrix is equal to the sum of the ranks of the stoichiometric matrices of the subnetworks.

We are interested with independent decompositions of CRNs because the set of positive steady states of the whole network is equal to the intersection of the sets of positive states of the independent subnetworks as given in Theorem \ref{feinberg:decom:thm} in Appendix \ref{Feinberg:Decomposition:Theorem}.

Hernandez et al. \cite{Hernandezetal2022,HDLC2021} developed a method of determining the finest independent decomposition, i.e., independent decomposition with the highest number of subnetworks. {It was shown that this finest independent decomposition is unique \cite{Hernandezetal2022}. This method is simple and efficient as we partition the reaction set via partitioning the rows of the associated stoichiometric matrix of the network.}
Additionally, programs developed using Octave and MATLAB were created by P. Lubenia so that one can easily obtain the finest independent decomposition by providing the reactions of the given network \cite{LubeniaINDECS}.

\subsection{Translated networks}
\label{section:network:translation}
We now discuss the notion of the \emph{generalized chemical reaction networks} (GCRNs) developed by Stefan M\"uller and Georg Regensburger \cite{Muller2012,Muller2014}.
Suppose that $G$ is a directed graph with vertex set $V$ and edge set $E \subseteq V \times V$. On an edge $i\to j:=(i,j)$, $i$ is called the \emph{source vertex}. Denote by $V_s$ the set of all source vertices in $V$.
We define a \emph{GCRN} as a directed graph $G$, with vertex set $V$, together with two maps: (i) $y: V \to \mathbb{R}^m_{\ge 0}$ that assigns to each vertex a \emph{stoichiometric complex}, and (ii) $\widetilde{y}: V_s \to \mathbb{R}^m_{\ge 0}$ that assigns to each vertex a \emph{kinetic complex}.

We also introduce the concept of network translation by Matthew Johnston \cite{Johnston2014}.
A GCRN $\mathcal{N}'$ is said to be a \emph{network translation} of a CRN $\mathcal{N}$ if the reaction vectors of the
original reactions are preserved,
and the original source complexes of $\mathcal{N}$ are transferred as kinetic complexes in $\mathcal{N}'$ \cite{Johnston2014,JMP2019:parametrization}.
The generalized network $\mathcal{N}'$ is also called a \emph{translated network}.
Therefore, the original CRN $\mathcal{N}$ and the translated network $\mathcal{N}'$ have the same set of ODEs, i.e., they are dynamically equivalent.

For example, suppose that a network $\mathcal{N}_E$ has only one reaction $A \to 2A$. We form $\mathcal{N}_E'$ by replacing the original reaction by $0 \to A$ with the same reaction vector as the original, i.e., $A-0=2A-A$. In addition, we consider the original source complex ($A$) of the original reaction $A \to 2A$ to dictate the rate function ($ka$) to maintain the dynamics. Thus, the translated network $\mathcal{N}_E$ is dynamically equivalent to $\mathcal{N}_E'$.

As a result, a translated network is a GCRN with two associated structures: the \emph{stoichiometric CRN} and the \emph{kinetic-order CRN}. The deficiencies of the stoichiometric and kinetic-order CRNs are called \emph{effective deficiency} and \emph{kinetic deficiency}, respectively.

When both the effective and kinetic deficiencies of a weakly reversible translated network of a CRN are both zero, then we can parametrize the positive steady states of a CRN efficiently with a formula given in
Theorem \ref{thm:parametrization} in Appendix \ref{GCRN}.

\subsection{Previous works and relation to the current work}

Recently, Hernandez et al. \cite{Hernandez2023} proposed a framework to parametrize the positive steady states of CRNs, endowed with mass-action kinetics, efficiently via network decomposition. Their method was illustrated to be useful for power-law systems \cite{Hernandez2023powerlaw}, a larger class of kinetic systems which contains the mass-action kinetics. Furthermore, Villareal et al. \cite{Villareal2024} extended the method of parametrization for CRNs that can be decomposed into $n$ identical and independent subnetworks for any positive integer $n$.

Based on Theorem \ref{feinberg:decom:thm} in Appendix \ref{Feinberg:Decomposition:Theorem}, 
the set of positive steady states of a given network is the intersection of the sets of positive steady states of its subnetworks when the underlying decomposition is independent. This result allowed Hernandez et al. \cite{Hernandez2023} and Hernandez and Buendicho \cite{Hernandez2023powerlaw} to use the method of Johnston et al. \cite{JMP2019:parametrization} on each of the subnetworks (endowed with mass-action kinetics and power-law kinetics, respectively). The parametrized steady states of each subnetwork are then combined to obtain the positive {steady states} of the original network.

Here in our current work, we consider networks that follow kinetics outside the previously considered kinetics (i.e., mass-action and power-law). For instance, we consider the case when reactions have different kinetics, e.g., some reactions follow the mass-action rate law, while others follow a certain quotient rate law.

\section{Results and discussion}
\label{sec:results}

We propose the following general step-by-step procedure to determine the parametrized {steady states} of a system:

\begin{enumerate}
    \item[{S1}.] Decompose the given CRN into independent subnetworks.
    \item[{S2}.] For each subnetwork, either 
       \begin{enumerate}
        \item[{a}.] use the method of Johnston et al. (Theorem \ref{thm:parametrization} in Appendix \ref{GCRN}) if it follows mass-action kinetics, or
        \item[{b}.] { {compute the {positive} steady states directly from the ODEs.}} 
       \end{enumerate}
    \item[{S3}.] Combine the positive steady states of the subnetworks to get the positive {steady states} of the whole network.
\end{enumerate}

{
\begin{remark}
Jonhston et al. \cite{Johnston2014} provided a linear programming method to assist in finding weakly reversible and deficiency zero networks. Furthermore, Hong et al. \cite{HongSIAM} provided a characterization on when a CRN can be translatable into a weakly reversible network by investigating the kernel of the stoichiometric matrix of the CRN and bounds on checking if the translated network has deficiency zero. A MATLAB computational package was built to facilitate the process. Importantly, network translation was applied to several networks \cite{Johnston2014,JMP2019:parametrization,Hernandez2023,MJEB2019,HongSIAM,Hong2021:CommBio}.
\end{remark}
}

What makes our new approach different from previous works \cite{Hernandez2023powerlaw,Hernandez2023} is the second step. Previous approaches apply the method of Johnston et al. to all subnetworks since each subnetwork follows mass-action or power-law kinetics. Our new approach applies the method of Johnston et al. only on subnetworks with mass-action (or power-law) kinetics. For those that follow {non-mass-action} (or non-power-law) kinetics such as quotient kinetics, we compute the steady states directly from the ODEs. 
We apply this approach to a complex mathematical network that involves reactions with {non-mass-action} rate laws.


\subsection{Computation of positive steady states of a complex insulin signaling pathway network with mixed kinetics via network decomposition}


We now illustrate our proposed method by solving for the positive steady state parametrization of a model of the insulin signaling in type 2 diabetes {\cite{BrannmarkInsulin2013}}. The CRN of the model has 36 reactions and 27 species (Figure \ref{fig:method}a). We let $k_i$ be the rate constant of reaction $R_i$. Furthermore, {we} let $x_j$ be the concentration of species $X_j$, {keeping the indices of the species as presented in \cite{LML2023} (see Appendix \ref{app:vars} for a list of the variables used in the model and their definition).}

{The insulin signaling cascade is activated by insulin reception by insulin receptors whose different states are represented by species $X_2$, $X_3$, $X_4$, $X_6$, and $X_7$. Intermediate reactions lead to translocation of the insulin-regulated glucose transporter GLUT4 in the cytosol ($X_{20}$) to the plasma membrane ($X_{21}$), allowing glucose uptake by the cell. Glucose is the main energy currency of the cell; thus, a functioning insulin signaling cascade is necessary for cells to function efficiently.}

The list of reactions (left) with the associated rate laws (right) are given as follows:
\allowdisplaybreaks
\begin{align*}
    R_1 &: X_2 \to X_3   &k_1 x_2\\
    R_2 &: X_2 \to X_4  &k_2 x_2\\
    R_3 &: X_3 \to X_4  &k_3 x_3\\
    R_4 &: X_4 \to X_7   &k_4 x_4\\
    R_5 &: X_7 +X_{25} \to X_6 +X_{25}  &k_5 x_7 x_{25}\\
    R_6 &: X_4 \to X_2  &k_6 x_4\\
    R_7 &: X_6 \to X_2   &k_7 x_6\\
    R_8 &: X_7+X_9 \to X_7+X_{10}  &k_8 x_7 x_9\\
    R_9 &: X_{10} \to X_9  &k_9 x_{10}\\
    R_{10} &: X_{10}+X_{31} \to X_{22}+X_{31}   &k_{10} x_{10} x_{31}\\
    R_{11} &: X_{22} \to X_{10}  &k_{11} x_{22}\\
    R_{12} &: X_{22} \to X_{33}  &k_{12} x_{22}\\
    R_{13} &: X_9 \to X_{23}  &k_{13} x_9\\
    R_{14} &: X_{23} \to X_9   &k_{14} x_{23}\\
    R_{15} &: X_{10} +X_{24} \to X_{10} +X_{25}  &k_{15} x_{10} x_{24}\\
    R_{16} &: X_{25} \to X_{24}  &k_{16} x_{25}\\
    R_{17} &: X_{10}+X_{26} \to X_{10}+X_{27}   &k_{17} x_{10}x_{26}\\
    R_{18} &: X_{27} \to X_{26}  &k_{18} x_{27}\\
    R_{19} &: X_{27}+X_{33} \to X_{29}+X_{33}  &k_{19} x_{27}x_{33}\\
    R_{20} &: X_{22}+X_{28} \to X_{22}+X_{29}   &k_{20} x_{22} x_{28}\\
    R_{21} &: X_{29} \to X_{28}  &k_{21} x_{29}\\
    R_{22} &: X_{28} \to X_{26}  &k_{22} x_{28}\\
    R_{23} &: X_{29}+X_{30} \to X_{29}+X_{31}  &k_{23} x_{29}x_{30}\\
    R_{24} &: X_{27}+X_{30} \to X_{27}+X_{31}   &k_{24} x_{27}x_{30}\\
    R_{25} &: X_{31} \to X_{30}  &k_{25} x_{31}\\    R_{26} &: X_{7}+X_{32} \to X_{7}+X_{33}  &k_{26} x_7 x_{32}\\
    R_{27} &: X_{33} \to X_{32}   &k_{27} x_{33}\\
    R_{28} &: X_{29}+X_{34} \to X_{29}+X_{35}  &k_{28} k_{29} x_{34}\\
    R_{29} &: X_{28}+X_{34}  \to X_{28}+X_{35}   &k_{29} \dfrac{x_{28}^\alpha}{\bar{k}^\alpha+x_{28}^\alpha}x_{34}\\
    R_{30} &: X_{35} \to X_{34}   &k_{30} x_{35}\\
    R_{31} &: X_{35}+X_{20} \to X_{35}+X_{21}  &k_{31} x_{35} x_{20}\\
    R_{32} &: X_{21} \to X_{20}  &k_{32} x_{21}\\
    R_{33} &: X_{31}+X_{36} \to X_{31}+X_{37}  &k_{33} \dfrac{x_{31}^\beta}{\tilde{k}^\beta+x_{31}^\alpha}x_{36}\\
    R_{34} &: X_{37} \to X_{36}   &k_{34} x_{37}\\
    R_{35} &: X_{39} \to X_{38}  &k_{35} x_{39}\\
    R_{36} &: X_{37}+X_{38} \to X_{37}+X_{39}  &k_{36} x_{37} x_{38}
\end{align*}
{where $\tilde{k}$, $\alpha$, and $\beta$ are constants.}

Br\"{a}nnmark et al modeled the activation of AS160 by the serine-473 phosphorylated PKB and the activation of S6K by mTORC1 using Hill-type kinetics \cite{BrannmarkInsulin2013}. These activations, corresponding to reactions $R_{29}$ and $R_{33}$, respectively, are the only reactions that follow non-mass-action kinetics. The rest follow mass-action kinetics. Hence, the network follows mixed kinetics, which is a combination of mass-action and quotient rate laws.


\begin{figure}[!h]
{
\footnotesize
\begin{center}
\includegraphics[width=18cm,height=16cm,keepaspectratio]{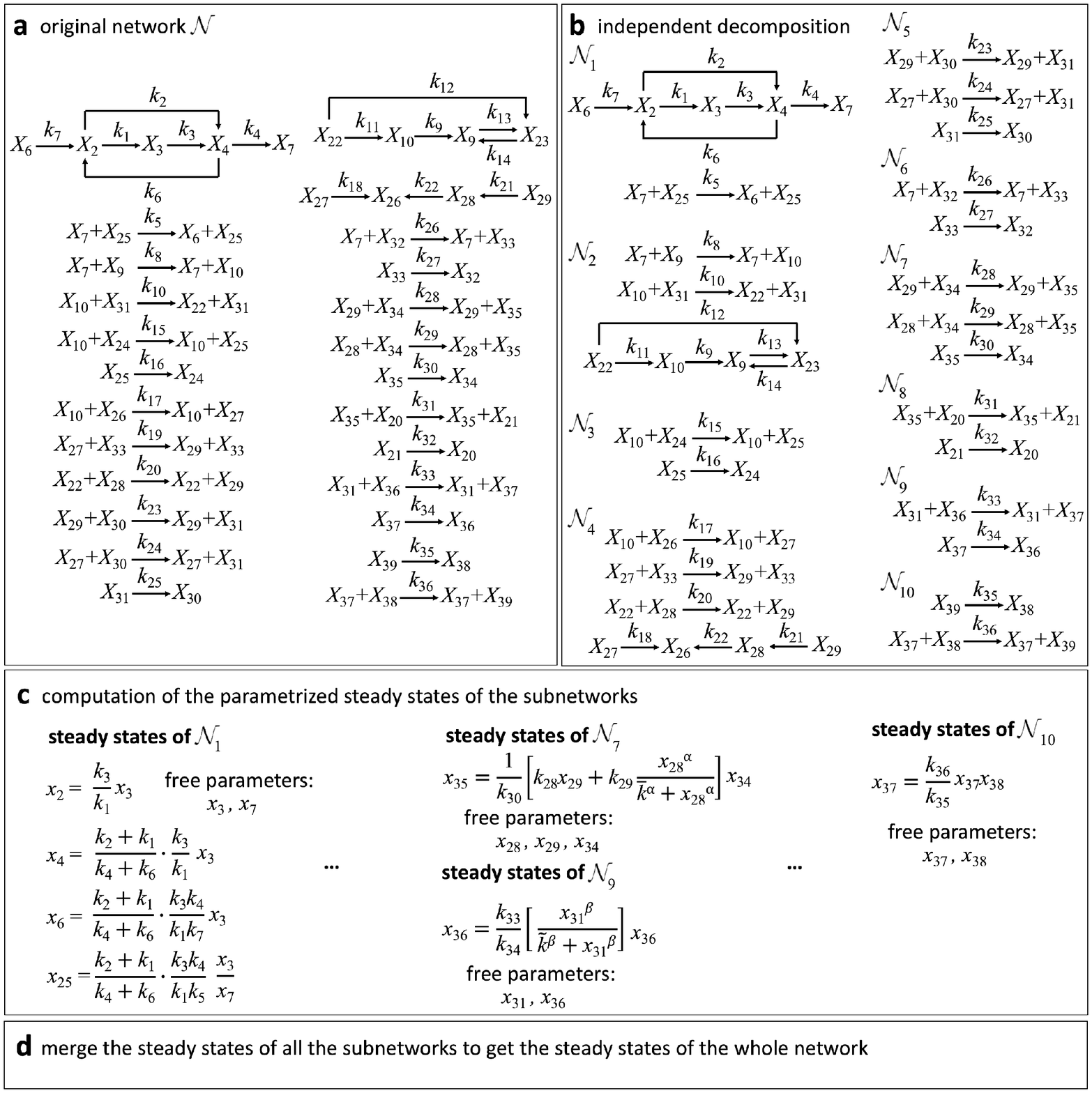}
\end{center}
}
\caption{{{\bf{Parametrization of positive steady states of CRNs with mixed kinetics.}}
{\bf{a}} A CRN $\mathcal{N}$ of a complex mathematical model of insulin signaling in type 2 diabetes.
{\bf{b}} $\mathcal{N}$ is decomposed into 10 independent subnetworks $\mathcal{N}_1, \ldots, \mathcal{N}_{10}$. Here, all the subnetworks except $\mathcal{N}_7$ and $\mathcal{N}_9$ follow mass-action kinetics. On the other hand, subnetworks $\mathcal{N}_7$ and $\mathcal{N}_9$ involve reactions that follow a quotient rate function. 
{\bf{c}} The steady state parametrization is computed for each subnetwork. For subnetworks $\mathcal{N}_1, \ldots, \mathcal{N}_{6}, \mathcal{N}_{8}, \mathcal{N}_{10}$, the parametrization process for mass-action is employed. In particular, we illustrate this process via subnetwork $\mathcal{N}_4$ in Figure \ref{fig:parametrization}. For subnetworks $\mathcal{N}_7$ and $\mathcal{N}_9$ with just three and two reactions, respectively, the steady states are computed using elementary ways of solving equations.
{\bf{d}} The steady states of {all the} subnetworks are merged to get the {steady states} of the whole network.
}
}
\label{fig:method}
\end{figure}

We solve for the positive {steady states} of the network by following the step-by-step procedure we introduced at the beginning of Section \ref{sec:results}. Applying Step S1, we decompose the network into independent subnetworks (Figure \ref{fig:method}b).
To easily get the finest independent decomposition (i.e., independent decomposition with the maximum number of subnetworks), we use the MATLAB program in \cite{LubeniaINDECS}. We enter all the reactions in the network and run the program to generate the output giving the independent decomposition.

We then compute the positive steady states of the subnetworks (Step S2). Among the 10 subnetworks, eight of them ($\mathcal{N}_1, \ldots, \mathcal{N}_{6}, \mathcal{N}_{8}$, and $\mathcal{N}_{10}$) follow entirely the mass-action law. Hence, the parametrization method for mass-action is employed (Step S2a). We illustrate this method for subnetwork $\mathcal{N}_4$ (Figure \ref{fig:parametrization}).

To apply Theorem \ref{thm:parametrization}, we determine a translated network that is both weakly reversible and $V^\star$-directed (Figure \ref{fig:parametrization}a upper right). The translated network has effective and kinetic deficiency of zero, and hence the positive {steady states} can be easily parametrized as described in Figure \ref{fig:parametrization}.

On the other hand, the remaining two subnetworks ($\mathcal{N}_7$ and $\mathcal{N}_9$) follow mixed kinetics. In particular, reactions $R_{29}$ and $R_{33}$ that follow quotient kinetics belong to subnetworks $\mathcal{N}_7$ and $\mathcal{N}_9$, respectively. In this case, the positive steady states of the said subnetworks are computed manually (Step S2b). Finally, the positive steady {state} of the whole network is derived by merging the positive steady states of the subnetworks (Step S3). In particular, if two subnetworks have the same species, then there are two expressions for the steady state formula of such species. We equate these expressions to reduce parameters and for the steady state formula to agree with both subnetworks.

Upon combining all the solutions and expressing the $\sigma$'s and $\tau$'s in terms of the species steady state variables, we get the following parametrized positive {steady states} where the species depend on the free parameters $x_3$, $x_7$, $x_{10}$, $x_{21}$, $x_{26}$, $x_{31}$, $x_{33}$, $x_{34}$, $x_{36}$, and $x_{38}$; and constants $\tilde{k}$, $\alpha$, and $\beta$:

{

\begin{align*}
    x_2 =&  \frac{k_3}{k_1} x_3 \\
    x_4 =&  \frac{k_3 (k_2 + k_1)}{k_1 (k_4 + k_6)} x_3 \\
    x_6 =&  \frac{k_4 k_3 (k_2 + k_1)}{k_7 k_1 (k_4 + k_6)} x_3 \\
    x_9 =&  \left( \frac{k_9}{k_8} + \frac{k_{12} k_{10}}{k_8 (k_{11} + k_{12})} x_{31} \right) \frac{x_{10}}{x_7} \\
    x_{20} =& \frac{k_{32}}{k_{31}} \frac{x_{21}}{x_{35}} \\
    x_{22} =&  \frac{k_{10}}{k_{11} + k_{12}} x_{10} x_{31} \\
    x_{23} =&  \frac{k_{13} k_9}{k_{14} k_8} \frac{x_{10}}{x_7} + \left( \frac{k_{13}}{k_8} \frac{1}{x_7} + 1 \right) \frac{k_{12} k_{10}}{k_{14} (k_{11} + k_{12})} x_{10} x_{31} \\
    x_{24} =&  \frac{k_{16} k_4 k_3 (k_2 + k_1)}{k_{15} k_5 k_1 (k_4 + k_6)} \frac{x_3}{x_{10} x_7} \\
    x_{25} =&  \frac{k_4 k_3 (k_2 + k_1)}{k_5 k_1 (k_4 + k_6)} \frac{x_3}{x_7} \\
    x_{27} =&  \frac{k_{17}}{k_{18} + k_{19} x_{33}} x_{26} x_{10} \\
    x_{28} =&  \frac{k_{19} k_{17}}{k_{22} (k_{18} + k_{19} x_{33})} x_{26} x_{10} x_{33} \\
    x_{29} =&  \left( 1 + \frac{k_{20} k_{10}}{k_{22} (k_{11} + k_{12})} x_{10} x_{31} \right) \frac{k_{19} k_{17}}{k_{21} (k_{18} + k_{19} x_{33})} x_{26} x_{10} x_{33} \\
    x_{30} =&  \frac{k_{25}}{\left( 1 + \frac{k_{20} k_{10}}{k_{22} (k_{11} + k_{12})} x_{10} x_{31} \right) \left( \frac{k_{17}}{k_{18} + k_{19} x_{33}} x_{26} x_{10} \right) \left( \frac{k_{23} k_{19}}{k_{21}} x_{33} + k_{24} \right)} x_{31} \\
    x_{32} =&  \frac{k_{27}}{k_{26}} \frac{x_{33}}{x_7} \\
    x_{35} =&  \left( 1 + \frac{k_{20} k_{10}}{k_{22} (k_{11} + k_{12})} x_{10} x_{31} \right) \frac{k_{28} k_{19} k_{17}}{k_{30} k_{21} (k_{18} + k_{19} x_{33})} x_{26} x_{10} x_{33} x_{34} \\ & + \frac{k_{29}}{k_{30}} \frac{\left( \frac{k_{19} k_{17}}{k_{22} (k_{18} + k_{19} x_{33})} x_{26} x_{10} x_{33} \right)^{\alpha}}{\bar{k}^{\alpha} + \left( \frac{k_{19} k_{17}}{k_{22} (k_{18} + k_{19} x_{33})} x_{26} x_{10} x_{33} \right)^{\alpha}} x_{34} \\
    x_{37} =&  \frac{k_{33}}{k_{34}} \frac{x_{31}^{\beta}}{\tilde{k}^{\beta} + x_{31}^{\beta}} x_{36} \\
    x_{39} =&  \frac{k_{36} k_{33}}{k_{35} k_{34}} \frac{x_{31}^{\beta}}{\tilde{k}^{\beta} + x_{31}^{\beta}} x_{38} x_{36}
\end{align*}
}

By inspection of the steady state formulas $x_i$ of species $X_i$, the system has no absolute concentration robustness (ACR) in {any} species because each steady state value depends on some free parameters. For instance, $x_2$ depends on $x_3$. Similarly, $x_{37}$ depends on $x_{31}$ and $x_{36}$.
ACR is a concept introduced by Guy Shinar and Martin Feinberg \cite{ShinarFeinberg2010} in the journal Science in 2010.
{A system with ACR on a particular species means that the value of the positive steady state for that species does not depend on any initial condition and hence the same for every positive steady state.}




{As mentioned by Lubenia and colleagues} \cite{LML2023}, {the parametrized steady states can potentially be utilized to collaborate with biologists, e.g., by identifying whether species concentrations can be modified to ensure they remain at least approximately constant, maintaining their ACR property and the efficient process they are involved in.}


{Before we proceed with an important result about merging of steady states of the subnetworks of a given network in Step S3, we define the following term.}

{
\begin{definition}
    Let $\mathcal{N}$ be a CRN, and $\mathcal{N}_1$ and $\mathcal{N}_2$ be subnetworks of $\mathcal{N}$. Subnetworks $\mathcal{N}_1$ and $\mathcal{N}_2$ are {\emph{mutually exclusive}} if the species in $\mathcal{N}_1$ do not appear in any complex or any reaction in $\mathcal{N}_2$, and the species in $\mathcal{N}_2$ do not appear in any complex or any reaction in $\mathcal{N}_1$.
\end{definition}
}

\begin{figure}[H]
{
\footnotesize
\begin{center}
\includegraphics[width=15cm,height=14cm,keepaspectratio]{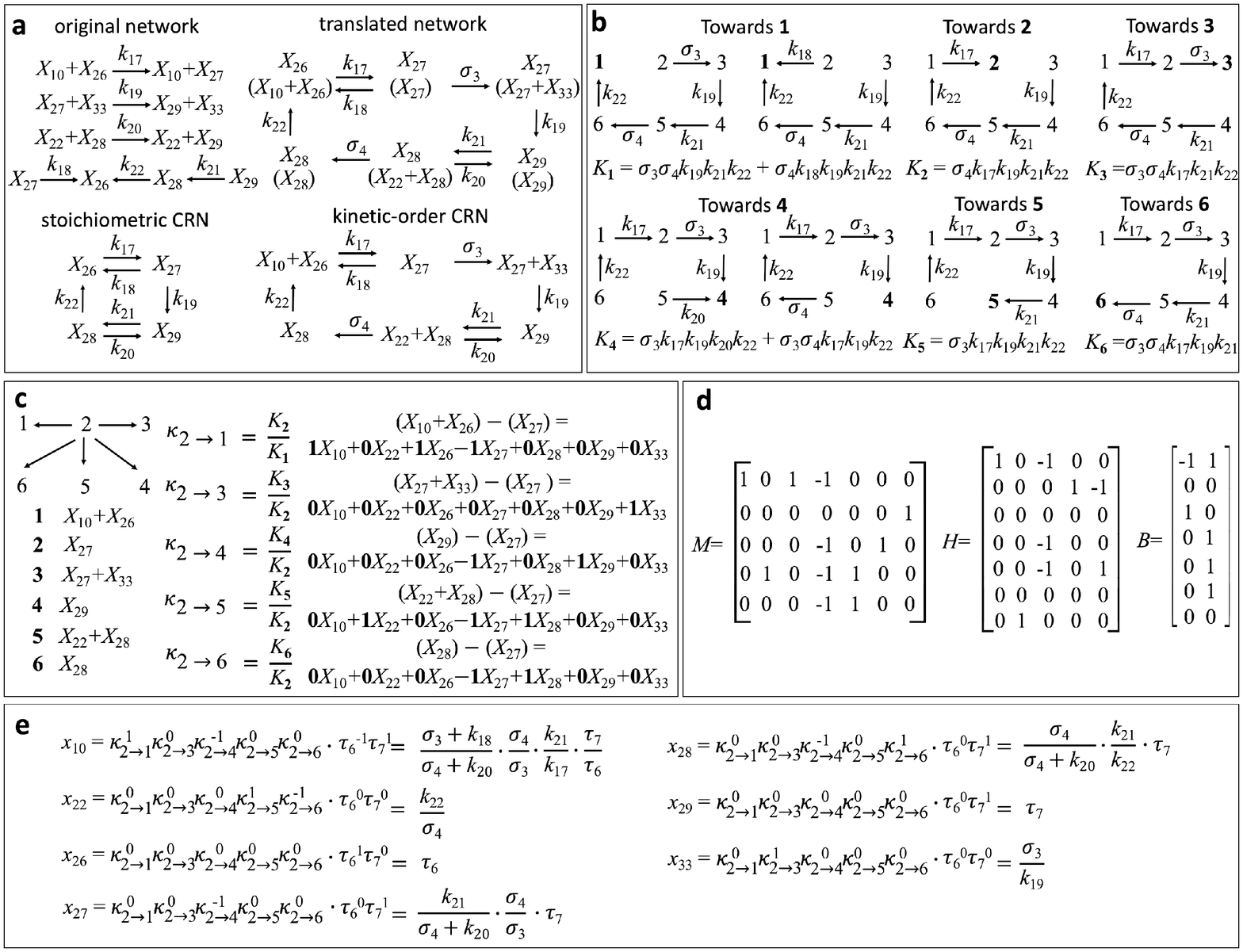}
\end{center}
}
\caption{{\bf Parametrization of positive steady states via network translation.}
{
{\bf{a}} The original CRN (upper left) is translated to a dynamically equivalent weakly reversible generalized network (upper right). The translated network has two substructures: the stoichiometric CRN (lower left) and the kinetic-order CRN (lower right). If the deficiencies of the stoichiometric and kinetic-order CRNs are both zero, proceed with parametrization of steady states.
{\bf{b}} Get all the spanning trees towards each node of the kinetic-order CRN where the kinetic complexes $X_{10}+X_{26}$, $X_{27}$, $X_{27}+X_{33}$, $X_{29}$, $X_{22}+X_{28}$, and $X_{28}$ are labeled 1, 2, 3, 4, 5, and 6, respectively. Compute the tree constant $K_i$ by multiplying the rate constants associated with the edges of a spanning tree. Then get the sum of the products over all the spanning trees towards complex node $i$. 
{\bf{c}} Choose an arbitrary tree containing all the nodes of the kinetic-order CRN (left). Compute $\kappa_{i\to i'}=\dfrac{K_{i'}}{K_{i}}$, i.e., the quotient between the tree constants directed towards $i'$ and $i$ (middle). Additionally, get the difference between the kinetic nodes associated with each edge (right).
{\bf{d}} Construct matrix $M$ such that the rows are the coefficients in the kinetic differences, $H$ is a generalized inverse of $M$ ($MHM=M$) and the ${\rm ker \ } M = B$.
{\bf{e}} Compute the parametrized {steady states} of the original network.
}
}
\label{fig:parametrization}
\end{figure}


{
\begin{proposition}
\label{prop:combine}
Let $\mathcal{N}$ be a CRN decomposed into its finest independent subnetworks $\mathcal{N}_1, \ldots, \mathcal{N}_m,\mathcal{N}_{m+1},\ldots,\mathcal{N}_k$ such that $\mathcal{N}_1, \ldots, \mathcal{N}_m$ have purely mass-action kinetics while $\mathcal{N}_{m+1},\ldots,\mathcal{N}_k$ have non-mass-action kinetics (e.g., mixed kinetics). Then
\begin{enumerate}
    \item $\mathcal{N}^M=\displaystyle \bigcup_{i=1}^m \mathcal{N}_i$ is purely mass-action and the largest mass-action independent subnetwork of $\mathcal{N}$ while $\mathcal{N}^C= \displaystyle \bigcup_{i=m+1}^k \mathcal{N}_i$ has non-mass-action kinetics.
    \item Suppose further that $\mathcal{N}^M$ and $\mathcal{N}^C$ are mutually exclusive and both of them have positive steady states for particular rate constants. Then the whole network $\mathcal{N}=\mathcal{N}^M \cup \mathcal{N}^C$ has positive steady states for the same rate constants considered for $\mathcal{N}^M$ and $\mathcal{N}^C$.
\end{enumerate}
\end{proposition}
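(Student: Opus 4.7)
The plan is to exploit two structural facts established earlier: the finest independent decomposition is unique and every independent decomposition is a coarsening of it \cite{Hernandezetal2022}, and under an independent decomposition the positive steady states satisfy $E_+(\mathcal{N}) = \bigcap_i E_+(\mathcal{N}_i)$ by the Feinberg Decomposition Theorem cited in the appendix. With these tools, both claims reduce to elementary verifications.

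For claim (1), I would first observe that $\mathcal{N}^M$ is purely mass-action because each of $\mathcal{N}_1,\ldots,\mathcal{N}_m$ is, and that $\mathcal{N}^C$ has non-mass-action kinetics because each of $\mathcal{N}_{m+1},\ldots,\mathcal{N}_k$ carries at least one non-mass-action reaction. To see that $\{\mathcal{N}^M,\mathcal{N}^C\}$ is itself an independent decomposition of $\mathcal{N}$, I would use that $S = S_1 \oplus \cdots \oplus S_k$ for the finest decomposition, so that $S_{\mathcal{N}^M} = S_1 + \cdots + S_m$ and $S_{\mathcal{N}^C} = S_{m+1} + \cdots + S_k$ inherit the direct-sum structure and give $S = S_{\mathcal{N}^M} \oplus S_{\mathcal{N}^C}$. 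For ``largest,'' I would invoke the refinement property: any mass-action independent subnetwork of $\mathcal{N}$ must be a union of finest pieces, and being purely mass-action forces those pieces to lie among $\{\mathcal{N}_1,\ldots,\mathcal{N}_m\}$, so the union is contained in $\mathcal{N}^M$.

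For claim (2), mutual exclusivity lets me glue the individual steady states. Given $x^M \in E_+(\mathcal{N}^M)$ and $x^C \in E_+(\mathcal{N}^C)$ for the prescribed rate constants, I would define $x^* \in \mathbb{R}^{\mathcal{S}}_{> 0}$ by taking $x^*_j = x^M_j$ for species $j$ that appears in $\mathcal{N}^M$, $x^*_j = x^C_j$ for species $j$ that appears in $\mathcal{N}^C$ (these index sets being disjoint by mutual exclusivity), and any positive value elsewhere. Because every rate function of a reaction in $\mathcal{N}^M$ depends only on the species appearing in its reactant, the species-formation-rate function of $\mathcal{N}^M$ evaluated at $x^*$ agrees with its value at $x^M$, hence vanishes; symmetrically for $\mathcal{N}^C$. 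By claim (1) the decomposition $\{\mathcal{N}^M,\mathcal{N}^C\}$ of $\mathcal{N}$ is independent, and the Feinberg Decomposition Theorem then places $x^*$ in $E_+(\mathcal{N}^M) \cap E_+(\mathcal{N}^C) = E_+(\mathcal{N})$, proving existence for the same rate constants.

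The main obstacle is making the ``depends only on $\mathcal{N}^M$-coordinates'' step airtight for every kinetics appearing in $\mathcal{N}^M$ and $\mathcal{N}^C$. For mass-action, this is immediate from $\mathcal{K}_{y\to y'}(x) = k_{y\to y'}\prod_{i} x_i^{y_i}$, since the exponents vanish on species absent from the source complex. For the non-mass-action reactions in $\mathcal{N}^C$ (e.g., the Hill and quotient rate laws in the insulin signaling example), the argument needs the implicit convention that each rate law is a function only of the species appearing in its reactant; this is in the spirit of the positivity condition ${\rm supp\,}y \subset {\rm supp\,}c$ in the definition of kinetics, so I would either appeal to that convention or state explicitly that it is the working assumption on all rate laws treated in this paper.
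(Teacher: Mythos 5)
Your proof is correct and its core idea for part (2) --- gluing the steady state of $\mathcal{N}^M$ to that of $\mathcal{N}^C$ coordinate-wise, which mutual exclusivity makes possible --- is exactly the paper's argument. The paper's own proof is much terser: for (1) it simply says the claim ``directly follows'' from the hypotheses, and for (2) it writes the steady states of $\mathcal{N}^M$ and $\mathcal{N}^C$ as $(a_1,\ldots,a_n,a_{n+1},\ldots,a_p)$ and $(b_1,\ldots,b_n,b_{n+1},\ldots,b_p)$ with the coordinates outside each subnetwork's own species being free parameters, and concludes that $(a_1,\ldots,a_n,b_{n+1},\ldots,b_p)$ is a steady state of $\mathcal{N}$. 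You supply three things the paper leaves implicit: the verification that $\{\mathcal{N}^M,\mathcal{N}^C\}$ is itself an independent decomposition (so that the Feinberg Decomposition Theorem gives $E_+(\mathcal{N}^M)\cap E_+(\mathcal{N}^C)=E_+(\mathcal{N})$), the refinement argument justifying ``largest'' in part (1), and the observation that the gluing only works if each rate function depends solely on species belonging to its own subnetwork. That last caveat is a genuine subtlety rather than pedantry: the definition of kinetics in the paper allows $\mathcal{K}_{y\to y'}$ to depend on arbitrary coordinates of $c$, and the stated definition of ``mutually exclusive'' speaks only of species appearing in complexes and reactions, not in rate laws; your suggestion to make this an explicit working assumption (satisfied by all the mass-action, Hill, and quotient rate laws in the paper's examples) is the right fix.
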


\begin{proof}
    (1) directly follows from the assumption that $\mathcal{N}_1,\ldots,\mathcal{N}_m$ are purely mass-action while $\mathcal{N}_{m+1},\ldots,\mathcal{N}_k$ are not. (2) Let $n$ and $p-n$ be the numbers of species of $\mathcal{N}^M$ and $\mathcal{N}^C$, respectively. Since $\mathcal{N}^M$ and $\mathcal{N}^C$ are mutually exclusive, then their steady states can be written in the form $(a_1,\ldots,a_n,a_{n+1},\ldots,a_p)$ and $(b_1,\ldots,b_n,b_{n+1},\ldots,b_p)$, respectively, where $a_{n+1}$, $\ldots,a_p,b_1,\ldots,b_n$ are free parameters, i.e., any positive real numbers. Hence, the positive steady states of the whole network can be written in the form $(a_1,\ldots,a_n,b_{n+1},\ldots,b_p)$.
\end{proof}

Proposition \ref{prop:combine} guarantees that whenever the largest mass-action independent subnetwork ($\mathcal{N}_M$), obtained by getting the union of all the mass-action subnetworks under independent decomposition, and its complement ($\mathcal{N}_C$) are mutually exclusive, then the positive {steady states} of the whole network {exist}, provided that the positive steady steady states of $\mathcal{N}_M$ and $\mathcal{N}_C$ exist. On the other hand, it is difficult to determine if the whole network has positive {steady states} when $\mathcal{N}_M$ and $\mathcal{N}_C$ are not mutually exclusive. This is an open problem that can be studied in the future.
}


\subsection{A remark on the computation of positive steady states via a transformation of a system to its associated mass-action system}

{In some instances, one can find an equivalent mass-action system for a network with mixed kinetics. The following is an interesting example from Yu and Craciun \cite{Yu2018}. They considered a CRN $\mathcal{N}$ with two reactions having the following associated rate functions:
\begin{align}
&R_1:X_1+X_2 \to 2X_1 & & \dfrac{k_1 x_1 x_2}{k_2 + x_1} \\
&R_2:2X_2 \to X_1+X_2 & &k_3 x_1^2.
\end{align}
The first reaction follows a rational reaction rate function (i.e., Michaelis-Menten rate function) while the second one {follows} the standard mass-action rate function.
Because $R_1$ and $R_2$ have different reaction rate functions, we say that the CRN follows mixed kinetics.
The associated system of ODEs for the CRN under the specified mixed kinetics is
\begin{align}
\label{mixed:kinetics:eq1}
\dfrac{dx_1}{dt}&=\dfrac{k_1 x_1 x_2}{k_2 + x_1}-k_3 x_1^2\\
\label{mixed:kinetics:eq2}
\dfrac{dx_2}{dt}&=-\dfrac{k_1 x_1 x_2}{k_2 + x_1}+k_3 x_1^2.
\end{align}

According to Yu and Craciun \cite{Yu2018}, one can instead study the following system of ODEs (although the authors did not specify in detail how they arrived at this equivalent system)
\begin{align}
\dfrac{dx_1}{dt}&=k_1 x_1 x_2 -k_2 k_3 x_1^2 - k_3 x_1^3
\label{eqnotrational1}
\\
\dfrac{dx_2}{dt}&=-k_1 x_1 x_2 +k_2 k_3 x_1^2 + k_3 x_1^3
\label{eqnotrational2}
\end{align}
instead of the system of ODEs comprised of Equations \ref{mixed:kinetics:eq1} and \ref{mixed:kinetics:eq2}. {Note that Equation \ref{eqnotrational1} can be obtained by multiplying Equation \ref{mixed:kinetics:eq1} by its denominator $k_2+x_1$. Similarly, Equation \ref{eqnotrational2} can be obtained by multiplying Equation \ref{mixed:kinetics:eq2} by the same denominator.}
The CRN $\mathcal{N}'$ associated with this system of ODEs is as follows:
\begin{align}
\label{mass:action:eq:1}
&R_1:X_1+X_2 \to 2X_1 & & k_1 x_1 x_2 \\
\label{mass:action:eq:2}
&R_2:2X_1 \to X_1 + X_2 & &k_2' x_1^2 \\
\label{mass:action:eq:3}
&R_3:3X_1 \to 2X_1 + X_2 & &k_3 x_1^3.
\end{align}
Note that when we multiply the ODEs of $\mathcal{N}'$ by $k_2+x_1$, we get precisely the ODEs associated to the mass-action kinetics where $k_2'=k_2k_3$.


COMPILES (COMPutIng anaLytic stEady States) is a computational package \cite{Hernandez2023} developed in MATLAB to compute the analytic positive steady states of a mass-action system and is available at
\url{https://github.com/Mathbiomed/COMPILES}. The user simply inputs all the reactions of a given network and the program then returns the positive steady state parametrization of the network. Entering Equations \ref{mass:action:eq:1}, \ref{mass:action:eq:2}, and \ref{mass:action:eq:3} into the program, we obtain the following parametrization:
\[x_1=\dfrac{\sigma}{k_3} \text{ and } x_2=\dfrac{\sigma (k_2'+\sigma)}{k_1 k_3}=\dfrac{\sigma (k_2 k_3+\sigma)}{k_1 k_3}\]
where $\sigma>0$.
One can easily check that the parametrization satisfies the original ODEs with Equations \ref{mixed:kinetics:eq1} and \ref{mixed:kinetics:eq2}. 





This short discussion leads to the open problem of determining associated ODEs in the format of mass-action. In particular, one may explore the conditions when a given system has an associated (at least with the same {steady states}) or dynamically equivalent mass-action system.
We also note that for complex networks, decomposition of the network into independent subnetworks can also be employed.
    }

\section{Summary and recommendation}
\label{sec:summary}

We proposed a framework to derive positive steady states of CRNs (with nontrivial independent decomposition) that follow {non-mass-action} kinetics, which may include polynomials, quotients, and mixed kinetics. This is done by modifying the algorithm in previous works that focus on mass-action and power-law kinetics \cite{Hernandez2023powerlaw,Hernandez2023}. In particular, after decomposing a given CRN into independent subnetworks, we compute the positive steady states of subnetworks that follow purely mass-action (or power-law) kinetics. On the other hand, we manually compute the positive steady states of subnetworks that follow other types of kinetics. We illustrated this approach via a complex mathematical model of insulin signaling in type 2 diabetes {\cite{BrannmarkInsulin2013}}. Using the obtained steady state parametrization, we were able to check the ACR property for each species in the given network. It would be very exciting to study chemical and biological properties of important systems existing in literature using our approach.
Furthermore, we have initiated looking into transformations of non-mass-action systems to mass-action. It is interesting to explore research in this direction.





\appendix

\section{Feinberg Decomposition Theorem}
\label{Feinberg:Decomposition:Theorem}
In this paper, we focus on independent decomposition due to the following result by Martin Feinberg \cite{Feinberg1987stability,FeinbergBook2019}.

\begin{theorem}
	\label{feinberg:decom:thm}
	Let $\mathcal{N}$ be a CRN (with kinetics $\mathcal{K}$) decomposed into subnetworks $\mathcal{N}_1, \mathcal{N}_2, \ldots, \mathcal{N}_\alpha$. Furthermore, let $\mathcal{K}_1, \mathcal{K}_2, \ldots, \mathcal{K}_\alpha$ be the restriction of $\mathcal{K}$ to reactions in $\mathcal{N}_1, \mathcal{N}_2, \ldots, \mathcal{N}_\alpha$, respectively.
 Then 
	\[E_+^1 \cap E_+^2 \cap \ldots \cap E_+^\alpha \subseteq E_+\]
 where $E_+^i$ is the set of positive steady states of subnetwork $\mathcal{N}_i$.
	If the network decomposition is independent, then equality holds, i.e.,
	\[E_+^1 \cap E_+^2 \cap \ldots \cap E_+^\alpha = E_+.\]
\end{theorem}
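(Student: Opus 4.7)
My plan is to treat the two parts of the proposition separately. Part (1) is structural and follows from the hypotheses plus the uniqueness of the finest independent decomposition, while Part (2) rests on the Feinberg Decomposition Theorem (Theorem \ref{feinberg:decom:thm}) combined with the mutual exclusivity to stitch together steady states living in complementary coordinate blocks.

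For Part (1), I would first note that $\mathcal{N}^M$ is purely mass-action by construction: it is a union of finitely many subnetworks each of which carries purely mass-action kinetics, and no rate function is changed by the union. To see that $\mathcal{N}^M$ is the \emph{largest} purely mass-action independent subnetwork of $\mathcal{N}$, I would invoke the uniqueness of the finest independent decomposition cited in Section \ref{CRN:decomposition}: any independent subnetwork of $\mathcal{N}$ arises as a union of some collection of the $\mathcal{N}_i$'s, since the finest decomposition refines every independent decomposition. Among such unions, the purely mass-action ones are exactly those that omit every $\mathcal{N}_j$ with $j > m$, because by hypothesis each such $\mathcal{N}_j$ has non-mass-action kinetics and therefore contains at least one non-mass-action reaction. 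Hence the maximal mass-action union is $\bigcup_{i=1}^m \mathcal{N}_i = \mathcal{N}^M$. That $\mathcal{N}^C$ has non-mass-action kinetics is immediate since, say, $\mathcal{N}_{m+1} \subseteq \mathcal{N}^C$ already contributes a non-mass-action reaction.

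For Part (2), I would first observe that $\{\mathcal{N}^M, \mathcal{N}^C\}$ is itself an independent decomposition of $\mathcal{N}$: since the underlying decomposition $\mathcal{N}_1, \dots, \mathcal{N}_k$ is independent, $S = S_1 \oplus \cdots \oplus S_k$, and regrouping gives $S = (S_1 \oplus \cdots \oplus S_m) \oplus (S_{m+1} \oplus \cdots \oplus S_k) = S^M \oplus S^C$. Theorem \ref{feinberg:decom:thm} then yields $E_+(\mathcal{N}) = E_+(\mathcal{N}^M) \cap E_+(\mathcal{N}^C)$, so it suffices to exhibit a point in this intersection. Here is where mutual exclusivity enters: reorder species so that $X_1, \ldots, X_n$ are those appearing in $\mathcal{N}^M$ and $X_{n+1}, \ldots, X_p$ are those appearing in $\mathcal{N}^C$. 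Since no complex or reaction of $\mathcal{N}^M$ touches $X_{n+1}, \ldots, X_p$, every mass-action rate function $\mathcal{K}_{y \to y'}$ of $\mathcal{N}^M$ depends only on $x_1, \ldots, x_n$, and every reaction vector of $\mathcal{N}^M$ has zero entries in the last $p - n$ coordinates; symmetrically for $\mathcal{N}^C$. Therefore the species formation rate function of $\mathcal{N}^M$ evaluated at $(a_1, \ldots, a_n, c_{n+1}, \ldots, c_p)$ depends neither on nor affects the last $p-n$ coordinates, and the analogous statement holds for $\mathcal{N}^C$ in the first $n$ coordinates. Picking a positive steady state $(a_1, \ldots, a_n, a_{n+1}, \ldots, a_p)$ of $\mathcal{N}^M$ and $(b_1, \ldots, b_n, b_{n+1}, \ldots, b_p)$ of $\mathcal{N}^C$, the stitched vector $(a_1, \ldots, a_n, b_{n+1}, \ldots, b_p)$ then lies simultaneously in $E_+(\mathcal{N}^M)$ and $E_+(\mathcal{N}^C)$, hence in $E_+(\mathcal{N})$.

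The main obstacle is really the implicit appeal in Part (1) to the fact that the finest independent decomposition is a common refinement of every independent decomposition, which is what allows the maximality statement to be made rigorous; the rest is bookkeeping once mutual exclusivity is read as saying that the rate functions and stoichiometric subspaces of $\mathcal{N}^M$ and $\mathcal{N}^C$ live in complementary coordinate blocks. If the refinement property is not readily available in the form needed, a fallback would be to phrase "largest" more modestly---namely, the largest among subnetworks obtainable as unions of the $\mathcal{N}_i$---which is what the argument naturally delivers.
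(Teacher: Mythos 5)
Your proposal does not prove the statement in question. The statement is Theorem \ref{feinberg:decom:thm} (the Feinberg decomposition theorem relating $E_+$ to $E_+^1 \cap \cdots \cap E_+^\alpha$), but what you have written is an argument for Proposition \ref{prop:combine} --- the result about $\mathcal{N}^M$, $\mathcal{N}^C$, mutual exclusivity, and the existence of stitched steady states. Worse, your Part (2) explicitly \emph{invokes} Theorem \ref{feinberg:decom:thm} (``Theorem \ref{feinberg:decom:thm} then yields $E_+(\mathcal{N}) = E_+(\mathcal{N}^M) \cap E_+(\mathcal{N}^C)$''), so as a proof of that theorem the argument is circular. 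The paper itself does not reprove the theorem; it quotes it from Feinberg's work, so there is no internal proof to match, but your text cannot stand in for one.

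For the record, the actual argument is short and has nothing to do with mass-action versus non-mass-action kinetics or mutual exclusivity. Since the $\mathcal{R}_i$ partition $\mathcal{R}$, the species formation rate function splits as $f = f_1 + \cdots + f_\alpha$, where $f_i(x) = \sum_{y \to y' \in \mathcal{R}_i} \mathcal{K}_{y\to y'}(x)(y'-y)$. If $x \in E_+^i$ for every $i$, then each $f_i(x) = 0$, hence $f(x) = 0$ and $x \in E_+$; this gives the inclusion $E_+^1 \cap \cdots \cap E_+^\alpha \subseteq E_+$ unconditionally. For the converse under independence: if $f(x) = 0$, then $0 = f_1(x) + \cdots + f_\alpha(x)$ with $f_i(x) \in S_i$, and since $S = S_1 \oplus \cdots \oplus S_\alpha$ is a direct sum, the zero vector has a unique representation as a sum of elements of the $S_i$, forcing $f_i(x) = 0$ for every $i$, i.e., $x \in E_+^i$ for all $i$. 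You should supply this argument (or the citation) in place of what you have written, and keep your current text where it belongs, as a proof attempt for Proposition \ref{prop:combine}.
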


\section{A detailed discussion on translated networks}
\label{GCRN}
In a GCRN, \emph{phantom edges} are those
that connect the same stoichiometric complexes. On the other hand, the \emph{effective edges} are those that connect different stoichiometric complexes. We denote by $E^0$ and $E^*$ the sets of phantom edges and effective edges in the GCRN, respectively.
The reaction vector associated with a phantom edge is zero since this edge connects similar stoichiometric complexes. Thus, phantom edges do not contribute to the ODEs of the system. The dummy rate constants associated with these edges are considered \emph{free parameters} and form a vector which we denote by $\sigma$. Furthermore, we denote by $k^*$ the vector of rate constants associated with the effective edges.

We need one more concept to state the theorem of parametrization of positive steady states \cite{JMP2019:parametrization}. This concept is the \textit{$V^\star$-directed networks}.
Here, we form equivalence classes such that each class contains the vertices with identical stoichiometric complex. A representative is then chosen for each class. 
In a $V^\star$-directed network, each effective edge $i\to j$ enters at a representative vertex. In other words, the product node $j$ is associated with a representative vertex. In addition, a phantom edge starts from a representative vertex and then {enters} another vertex within the class.
The reader can check \cite{JMP2019:parametrization} for more details. 

{
\begin{example}
Consider the following CRN endowed with mass-action kinetics 
\cite{Gunawardena2003}:
\begin{center}
\begin{tikzpicture}
        \tikzset{vertex/.style = {minimum size=2em}}
        \tikzset{edge/.style = {->,> = {Stealth[length=2mm, width=2mm]}}}
        \node[vertex] (A) at (0,0) {$A+E$};
        \node[vertex] (AE) at (2.2,0) {$AE$};
        \node[vertex] (BE) at (4.4,0) {$B+E$};
        \node[vertex] (B) at (6,0) {$B$};
        \node[vertex] (Z) at (8.2,0) {$0$};
        \node[vertex] (A1) at (10.4,0) {$A$};
        \draw[edge, thick] (A.9) to (AE.165);
        \draw[edge, thick] (AE.194) to (A.350);
        \draw[edge, thick] (AE.15) to (BE.170);
        \draw[edge, thick] (BE.190) to (AE.343);
        \draw[edge, thick] (B) to (Z);
        \draw[edge, thick] (Z) to (A1);
\end{tikzpicture}
\end{center}
Let us denote this network by $\mathcal{N}$.
It consists of two linkage classes. The first linkage class is the underlying network of a reversible Michaelis-Menten system for enzyme kinetics, where the species $A$, $E$, and $B$ denote a substrate, an enzyme, and a product, respectively. The second linkage class takes into account the depletion of product $B$ and replenishment of substrate $A$.

By adding $E$ to each complex in the second linkage class, we get $B+E \to E \to A+E$ where the reaction vectors are unchanged. Hence, the two resulting complexes ($B+E$ and $A+E$) coincide with the complexes in the first linkage class, and we obtain the following network, which we denote by $\mathcal{N}'_S$:
\begin{center}
\begin{tikzpicture}
        \tikzset{vertex/.style = {minimum size=2em}}
        \tikzset{edge/.style = {->,> = {Stealth[length=2mm, width=2mm]}}}
        \node[vertex] (A) at (0,0) {$A+E$};
        \node[vertex] (AE) at (2.2,0) {$AE$};
        \node[vertex] (BE) at (4.4,0) {$B+E$};
        \node[vertex] (E) at (2.2,-1.8) {$E$};
        \draw[edge, thick] (A.9) to (AE.165);
        \draw[edge, thick] (AE.194) to (A.350);
        \draw[edge, thick] (AE.15) to (BE.170);
        \draw[edge, thick] (BE.190) to (AE.343);
        \draw[edge, thick] (BE) to (E);
        \draw[edge, thick] (E) to (A);
\end{tikzpicture}
\end{center}
Since the new reaction $B+E \to E$ was obtained from the original reaction $B \to 0$, we transfer the original source complex ($B$) as the kinetic complex that we associate with the stoichiometric complex $B+E$. The original source complex $B$ dictates the rate function of the reaction so that the original and the new systems are dynamically equivalent. Furthermore, the unchanged reaction $B+E \to AE$ and the new reaction $B+E\to E$ share the same stoichiometric complexes, but have different associated kinetic complexes because the original source nodes for $B+E \to AE$ and $B+E\to E$ are $B+E$ and $B$, respectively.  Hence, we split the stoichiometric complex ($B+E$) with different kinetic complexes ($B+E$ and $B$) by introducing a phantom edge, which has no effect on the system's ODEs. Hence, we obtain the following translated network, which we denote by $\mathcal{N}'$.

\begin{center}
\begin{tikzpicture}
        \tikzset{vertex/.style ={rectangle, draw, minimum width =10pt,{minimum size=2em}}}
        \tikzset{edge/.style = {->,> = {Stealth[length=2mm, width=2mm]}}}
        \node[vertex,text width=1.12cm,text centered] (A) at (0,0) {{ \circled{1} \ \ \ \ \ $A+E$ \ \ $(A+E)$}};
        \node[vertex,text width=1.12cm,text centered] (AE) at (3,0) {{ \circled{2} \ \ \ \ \ $AE$ \ \ $(AE)$}};
        \node[vertex,text width=1.12cm,text centered] (BE) at (6,0) {{ \circled{3} \ \ \ \  $B+E$ \ \ $(B+E)$}};
        \node[vertex,text width=1.12cm,text centered] (E) at (1.5,-3.5) {{ \circled{5} \ \ \ \  $E$ \ \ \ $(0)$}};
        \node[vertex,text width=1.12cm,text centered] (B) at (4.5,-3.5) {{ \circled{4} \ \ \ \  $B+E$ \ \ $(B)$}};
        \draw[edge, thick] (A.11) to (AE.168)
        node[above,xshift=-8mm] {$k_1$};
        \draw[edge, thick] (AE.190) to (A.350)
        node[below,xshift=8mm] {$k_2$};
        \draw[edge, thick] (AE.12) to (BE.168)
        node[above,xshift=-8mm] {$k_3$};
        \draw[edge, thick] (BE.190) to (AE.350)
        node[below,xshift=8mm] {$k_4$};
        \draw[edge, thick] (BE) to (B)
        node[right,xshift=7mm,yshift=17mm] {$\sigma$};
        \draw[edge, thick] (E) to (A)
        node[left,xshift=8mm,yshift=-18mm] {$k_6$};
        \draw[edge, thick] (B) to (E)
        node[left,xshift=19mm,yshift=-3mm] {$k_5$};
\end{tikzpicture}
\end{center} 


In the translated network $\mathcal{N}'$, the stoichiometric complexes are indicated without parentheses while the kinetic complexes are indicated with parentheses.
We also label the nodes of the translated network by $1,2,\ldots,5$.

The two structures that make up the translated network are the stoichiometric network $\mathcal{N}'_S$ and the kinetic-order network $\mathcal{N}'_K$. The latter can be obtained by removing all the stoichiometric complexes and leaving all the kinetic complexes in $\mathcal{N}'$, i.e., $\mathcal{N}'_K$ is as follows:

\begin{center}
\begin{tikzpicture}
        \tikzset{vertex/.style ={rectangle, draw, minimum width =10pt,{minimum size=2em}}}
        \tikzset{edge/.style = {->,> = {Stealth[length=2mm, width=2mm]}}}
        \node[vertex,text width=1.12cm,text centered] (A) at (0,0) {{ \circled{1} \\ \ \ \ \ \ $A+E$}};
        \node[vertex,text width=1.12cm,text centered] (AE) at (3,0) {{ \circled{2} \\ \ \ \ \ \ $AE$}};
        \node[vertex,text width=1.12cm,text centered] (BE) at (6,0) {{ \circled{3} \\ \ \ \ \  $B+E$}};
        \node[vertex,text width=1.12cm,text centered] (E) at (1.5,-3.5) {{ \circled{5} \\ \ \ \ \\ $0$}};
        \node[vertex,text width=1.12cm,text centered] (B) at (4.5,-3.5) {{ \circled{4} \\ \ \ \  \\ $B$}};
        \draw[edge, thick] (A.11) to (AE.168)
        node[above,xshift=-8mm] {$k_1$};
        \draw[edge, thick] (AE.190) to (A.350)
        node[below,xshift=8mm] {$k_2$};
        \draw[edge, thick] (AE.12) to (BE.168)
        node[above,xshift=-8mm] {$k_3$};
        \draw[edge, thick] (BE.190) to (AE.350)
        node[below,xshift=8mm] {$k_4$};
        \draw[edge, thick] (BE) to (B)
        node[right,xshift=7mm,yshift=17mm] {$\sigma$};
        \draw[edge, thick] (E) to (A)
        node[left,xshift=8mm,yshift=-18mm] {$k_6$};
        \draw[edge, thick] (B) to (E)
        node[left,xshift=19mm,yshift=-3mm] {$k_5$};
\end{tikzpicture}
\end{center} 

We choose a set of representatives $V^* = \{1,2,3,5\}$, which is a subset of the vertex set $V = \{1,2,3,4,5\}$, so that the translated network ($\mathcal{N}'$) is a $V^\star$-directed network, i.e., the product nodes of the effective edges ($k_1$, $k_2$, \ldots, $k_6$) are representative nodes while the phantom edge ($\sigma$) starts from a representative node (i.e., node $3$) and then enter another node within the class (i.e., node $4$).
\label{additional:example:translated}
\end{example}
}

{
\begin{remark}
Note that a computational package called TOWARDZ (TranslatiOn toward
WeAkly Reversible and Deficiency Zero networks) developed using MATLAB was created \cite{HongSIAM} to facilitate network translation that gives, in particular, weakly reversible and deficiency zero networks, if successful.
\end{remark}
}

We now introduce the following main result in the paper by Johnston et al. \cite{JMP2019:parametrization}.

\begin{theorem}
Consider a translated network that is {\emph{weakly reversible and $V^\star$-directed}}. Let $\mathcal{F}$ be any spanning forest containing all nodes of its kinetic-order CRN.
Moreover, let $M$ be the matrix containing all kinetic differences as rows, where the entries per row are arranged according to the order of the species. Furthermore, let $H$ be a matrix such that $M H M = M$, i.e., a generalized inverse of $M$. Finally, define $B$ such that ${\rm im \ } B = \ker M$ and $\ker B = \{\mathbf{0}\}$. Then,
if {\emph{both the effective deficiency and the kinetic deficiency are zero}}, it follows that the set of positive steady states of the original network is equal to:
\begin{equation*}
{\left\{ \kappa(k^*,\sigma)^{H} \circ \tau^{B^\top} | \sigma \in \mathbb{R}^{E^0}_{>0}, \, \tau \in \mathbb{R}^{m - \tilde s}_{>0} \right\} \neq \varnothing}
\end{equation*}
where $\kappa(k^*,\sigma)^{H} \circ \tau^{B^\top}$ is the Hadamard product with the component of $\kappa$ associated with the edge $i \to i'$ defined as $\kappa_{i\to i'}=\dfrac{K_{i'}}{K_i}$ and the \emph{tree constant} $K_i$ defined as the sum (over all the spanning trees of the kinetic-order CRN towards node $i$) of the products of the rate constants associated with the edges of each spanning tree, {and $\tilde{s}$ is the rank of the kinetic-order CRN}.
\label{thm:parametrization}
\end{theorem}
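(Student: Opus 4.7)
The plan is to transfer the problem to the dynamically equivalent translated network $\mathcal{N}'$, reduce the condition $f(x) = 0$ on $\mathcal{N}'$ to a complex-balancing condition on the kinetic-order CRN, use the Matrix-Tree (Kirchhoff) Theorem to solve that condition in monomial form, and then linearize via logarithms to produce the explicit formula $\kappa^H \circ \tau^{B^\top}$.

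First I would observe that $\mathcal{N}$ and $\mathcal{N}'$ share the same ODEs, hence the same $E_+$. On $\mathcal{N}'$, the species formation rate function decomposes as $f(x) = \sum_{i \to j \in E^*} k_{i \to j}\, x^{\tilde y_i}(y_j - y_i)$, since every phantom edge $i \to j \in E^0$ satisfies $y_j - y_i = 0$ and hence contributes nothing. With the stoichiometric CRN weakly reversible and of effective deficiency zero, a standard Horn–Jackson / deficiency-zero argument adapted to the generalized setting shows that $f(x) = 0$ is equivalent to the flux through every stoichiometric vertex being balanced. The $V^\star$-directed assumption lets me identify each stoichiometric vertex with the unique representative vertex of its class, so these balances become the complex-balancing equations on the kinetic-order CRN, namely $\sum_{i : i \to j} k_{i \to j}\, x^{\tilde y_i} = \sum_{\ell : j \to \ell} k_{j \to \ell}\, x^{\tilde y_j}$ for every vertex $j$, where the phantom-edge rate constants appear as the free parameters $\sigma$.

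Next I would invoke the Matrix-Tree Theorem. Because the kinetic-order CRN is weakly reversible with kinetic deficiency zero, the positive solutions of the above linear system in the monomial unknowns $m_i := x^{\tilde y_i}$ form a one-parameter family per linkage class, explicitly $m_i = c\, K_i$, where $K_i$ is the sum over spanning trees rooted at $i$ of the products of their edge weights. Taking the ratio across any edge $i \to i'$ gives $x^{\tilde y_{i'} - \tilde y_i} = K_{i'}/K_i = \kappa_{i \to i'}(k^*, \sigma)$. Passing to logarithms converts this monomial system into the linear system $M u = \log \kappa$ in $u := \log x \in \mathbb{R}^m$. Zero kinetic deficiency is precisely the statement that this system is consistent for every positive choice of $(k^*, \sigma)$, so $u^* := H \log \kappa$ is a particular solution (using $M H M = M$), and the full solution set is $\{u^* + B v : v \in \mathbb{R}^{m - \tilde s}\}$. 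Exponentiating componentwise and writing $\tau := e^v$ produces the desired formula $x = \kappa(k^*,\sigma)^H \circ \tau^{B^\top}$, and non-emptiness is automatic since any positive $\sigma$ and $\tau$ yield a positive $x$.

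The main obstacle I anticipate is the careful execution of two compatibility facts. The first is that $f(x) = 0$ genuinely reduces to complex balancing on the kinetic-order CRN, which requires a deficiency-zero argument in the generalized setting together with bookkeeping to merge phantom-edge contributions with effective ones through the $V^\star$-directed structure. The second is that the Matrix-Tree identity correctly packages the complex-balance solutions as ratios of tree constants in the presence of phantom edges, whose free rate constants $\sigma$ must be threaded through as genuine parameters rather than as fixed data. Keeping the split between the stoichiometric and kinetic-order networks clean, and verifying that the rank of $M$ equals the kinetic-order stoichiometric dimension $\tilde s$ so that $B$ has the stated $m - \tilde s$ columns, will absorb most of the technical effort; once this infrastructure is in place, both inclusions of the set equality follow by direct substitution and by tracing the linear-algebraic parametrization back through the Matrix-Tree identity.
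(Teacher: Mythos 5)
Your outline is correct, but note that this paper does not prove the theorem at all: it is quoted verbatim as the main result of Johnston, M\"uller, and Pantea \cite{JMP2019:parametrization}, and the authors simply refer the reader there. Your proposed argument --- dynamical equivalence with the translated network, reduction of $f(x)=0$ to complex balancing via effective deficiency zero, the Matrix-Tree characterization $m_i = cK_i$ of the balanced fluxes, and the log-linearization $Mu=\log\kappa$ solved with the generalized inverse $H$ and kernel basis $B$ under kinetic deficiency zero --- is essentially the proof given in that source, and you have correctly flagged the two genuinely delicate points (the role of $V^\star$-directedness in passing from balancing of stoichiometric classes to balancing at every vertex, and the threading of the phantom-edge constants $\sigma$ as free parameters).
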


\begin{remark}
    {Theorem \ref{thm:parametrization} does not work on all mass-action systems and is not the only method available for mass-action systems.}
\end{remark}

{
\begin{example}
Consider the original network and its $V^\star$-directed translated network in Example \ref{additional:example:translated}.
It can be easily shown that the translated network is both weakly reversible and deficiency zero, i.e., the stoichiometric CRN $\mathcal{N}'_S$ and the kinetic-order CRN $\mathcal{N}'_K$ are both weakly reversible and deficiency zero.

We now apply Theorem \ref{thm:parametrization}. We consider any tree (i.e., a connected graph without a cycle) containing all the nodes of the kinetic-order CRN. We take a spanning tree with edges $1\to 2$, $1\to 3$, $1\to 4$, and $1\to 5$.
We then compute the kinetic difference $i'-i$ for each edge $i \to i'$. For example, the associated kinetic difference with the edge $1\to 2$ is $AE-(A+E)=-1A-1E+1AE+0B=[-1,-1,1,0]^\top$. We do this for all the edges.
Hence, the matrix $M$ that contains the coefficients in the kinetic difference of an edge per row is

\begin{equation}
\nonumber
M=\begin{blockarray}{ccccc}
A & E & AE & B & \\
\begin{block}{[cccc]c}
-1 & -1 & 1 & 0 & 1\to 2 \\  
-1 & 0 & 0 & 1 & 1\to 3 \\  
-1 & -1 & 0 & 1 & 1\to 4 \\  
-1 & -1 & 0 & 0 & 1\to 5 \\
\end{block}
\end{blockarray}.
\end{equation}

We then compute a generalized inverse $H$ of $M$, i.e., $M H M = M$, and the matrix $B$ where ${\rm im} B = \ker(M)$. We have
\begin{equation}
\nonumber
H=\begin{blockarray}{cccc}
\begin{block}{[cccc]}
0 & 0 & 1 & 0  \\  
-1 & 1 & 0 & 0  \\  
1 & -1 & 0 & 1  \\  
-1 & 0 & -1 & -1 \\
\end{block}
\end{blockarray} \ {\text{and}} \ 
B=\begin{blockarray}{c}
\begin{block}{[c]}
0  \\  
0  \\  
0  \\  
0  \\
\end{block}
\end{blockarray}.
\end{equation}

Meanwhile, we get all the spanning trees towards each of the nodes in the kinetic-order CRN. The required spanning trees are given in Figure \ref{fig:spanningtreeaddex}.

{
\begin{figure}[H]
{
\footnotesize
\begin{center}
\includegraphics[width=14cm,height=8cm,keepaspectratio]{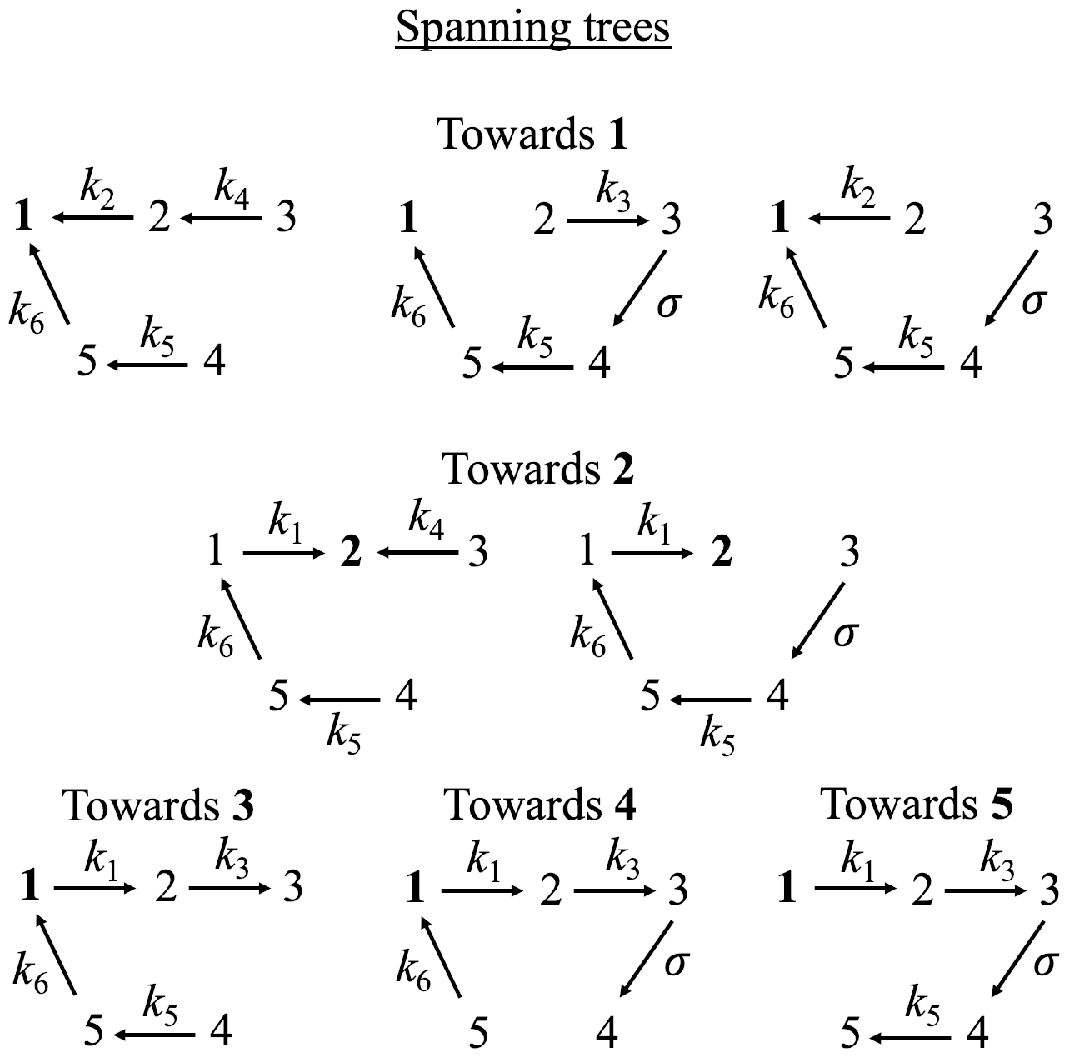}
\end{center}
}
\caption{{{\bf{The spanning trees towards each node in Example \ref{additional:example}.}}
The spanning trees towards the nodes $1,2,\ldots,5$ are enumerated. The edges in the spanning trees are associated with their corresponding rate constants.
}
}
\label{fig:spanningtreeaddex}
\end{figure}
}

Next, we compute the {\it tree constants} $K_i$, for each vertex $i$, of the $V^\star$-directed GCRN given by
 $K_i=\displaystyle \sum _{(\mathcal{V},\mathcal{E}) \in T_i} \prod _{i \to i' \in \mathcal{E}} k_{i \to i'}$ 
 where 
 $T_i$ is the collection of all directed spanning trees rooted at vertex $i$. We then form $\kappa _{i \to i'}=\dfrac{K_{i'}}{K_i}$ for $i \to i' \in \mathcal{R}$.
 The tree constants are
\begin{align*}
    K_1&={k_5 k_6 (k_2 k_4 + k_3 \sigma + k_2 \sigma)}\\
    K_2&={k_1 k_5 k_6 (k_4 + \sigma)}\\
    K_3&={k_1 k_3 k_5 k_6}\\
    K_4&={k_1 k_3 \sigma k_6}\\
    K_5&={k_1 k_3 \sigma k_5}.
\end{align*}
Thus, we get
$\kappa_{1\to 2}=\dfrac{k_1(k_4 + \sigma)}{k_2 k_4 + k_3 \sigma + k_2 \sigma}$, $\kappa_{1\to 3}=\dfrac{k_1 k_3}{k_2 k_4 + k_3 \sigma + k_2 \sigma}$,
$\kappa_{1\to 4}=\dfrac{k_1 k_3 \sigma}{k_5(k_2 k_4 + k_3 \sigma + k_2 \sigma)}$, and $\kappa_{1\to 5}=\dfrac{k_1 k_3 \sigma}{k_6(k_2 k_4 + k_3 \sigma + k_2 \sigma)}$.
Therefore, we obtain the following steady state parametrization of the original network:

\begin{align*}
    a&=\left(\dfrac{k_1 k_3}{k_2 k_4 + k_3 \sigma + k_2 \sigma}\right)^{-1}\cdot\left(\dfrac{k_1 k_3 \sigma}{k_5(k_2 k_4 + k_3 \sigma + k_2 \sigma)}\right)^1\\
    &\cdot\left(\dfrac{k_1 k_3 \sigma}{k_6(k_2 k_4 + k_3 \sigma + k_2 \sigma)}\right)^{-1}\\
    &=\dfrac{k_6}{k_1 k_3 k_5}(k_2 k_4 + k_3 \sigma + k_2 \sigma)\\
    e&=\left(\dfrac{k_1 k_3}{k_2 k_4 + k_3 \sigma + k_2 \sigma}\right)^1\cdot\left(\dfrac{k_1 k_3 \sigma}{k_5(k_2 k_4 + k_3 \sigma + k_2 \sigma)}\right)^{-1}=\dfrac{k_5}{\sigma}\\
    ae&=\left(\dfrac{k_1(k_4 + \sigma)}{k_2 k_4 + k_3 \sigma + k_2 \sigma}\right)^1\cdot\left(\dfrac{k_1 k_3 \sigma}{k_6(k_2 k_4 + k_3 \sigma + k_2 \sigma)}\right)^{-1}=\dfrac{k_6(k_4+\sigma)}{k_3 \sigma}\\
    b&=\left(\dfrac{k_1 k_3 \sigma}{k_5(k_2 k_4 + k_3 \sigma + k_2 \sigma)}\right)^1\cdot\left(\dfrac{k_1 k_3 \sigma}{k_6(k_2 k_4 + k_3 \sigma + k_2 \sigma)}\right)^{-1}=\dfrac{k_6}{k_5}
\end{align*}
where $\sigma >0.$
\label{additional:example}
\end{example}
}

\section{Definition of variables in insulin signaling in type 2 diabetes}\label{app:vars}


The following are the variables used in the model of insulin signaling in type 2 diabetes as presented by \cite{LML2023} based on \cite{BrannmarkInsulin2013}:
\begin{align*}
    & X_2 = \text{inactive receptor} \\
    & X_3 = \text{insulin-bound receptor} \\
    & X_4 = \text{tyrosine-phosphorylated receptor} \\
    & X_6 = \text{internalized dephosphorylated receptor} \\
    & X_7 = \text{tyrosine-phosphorylated and internalized receptor} \\
    & X_9 = \text{inactive IRS1} \\
    & X_{10} = \text{tyrosine-phosphorylated IRS1} \\
    & X_{20} = \text{glucose transporter 4 from the cytosol} \\
    & X_{21} = \text{glucose transporter 4 in the plasma membrane} \\
    & X_{22} = \text{combined tyrosine/serine 307-phosphorylated IRS1} \\
    & X_{23} = \text{serine 307-phosphorylated IRS1} \\
    & X_{24} = \text{inactive negative feedback} \\
    & X_{25} = \text{active negative feedback} \\
    & X_{26} = \text{inactive PKB} \\
    & X_{27} = \text{threonine 308-phosphorylated PKB} \\
    & X_{28} = \text{serine 473-phosphorylated PKB} \\
    & X_{29} = \text{combined threonine 308/serine 473-phosphorylated PKB} \\
    & X_{30} = \text{mTORC1} \\
    & X_{31} = \text{mTORC1 involved in phosphorylation of IRS1 at serine 307} \\
    & X_{32} = \text{mTORC2} \\
    & X_{33} = \text{mTORC2 involved in phosphorylation of PKB at threonine 473} \\
    & X_{34} = \text{AS160} \\
    & X_{35} = \text{AS160 phosphorylated at threonine 642} \\
    & X_{36} = \text{S6K} \\
    & X_{37} = \text{activated S6K; phosphorylated at threonine 389} \\
    & X_{38} = \text{S6} \\
    & X_{39} = \text{activated S6; phosphorylated at serine 235 and serine 236}
\end{align*}

\end{document}